\newtheorem{theorem}{Theorem}[section]
\newtheorem{lemma}[theorem]{Lemma}
\newtheorem{proposition}[theorem]{Proposition}
\newtheorem{definition}[theorem]{Definition}
\theoremstyle{remark}
\newtheorem{remark}{Remark}[section]
\theoremstyle{definition}
\newtheorem{Q}{Question}[section]
\def\<{\langle}
\def\>{\rangle}
\newcommand{\romd}{\mathrm{d}}
\newcommand{\lspan}{\mathrm{span}}
\newcommand{\aaa}{\alpha}
\newcommand{\bb}{\beta}
\newcommand{\slog}{\mathrm{slog}}
\begin{document}

\title{Almost bi--Lipschitz embeddings using covers of balls centred at the origin}

\author{Alexandros Margaris}
\date{}

\maketitle

\begin{abstract}
In 2010, Olson \& Robinson [Transactions of the American Mathematical Society, 362(1), 145-168] introduced the notion of an almost homogeneous metric space and showed that if $X$ is a subset of a Hilbert space such that $X-X$ is almost homogeneous, then $X$ admits almost bi--Lipschitz embeddings into Euclidean spaces. In this paper, we extend this result and we show that if $X$ is a subset of a Banach space such that $X-X$ is almost homogeneous at the origin, then $X$ can be embedded in a Euclidean space in an almost bi--Lipschitz way.
\end{abstract}

\section{Introduction}

We say that a metric space $(X,d)$ is doubling if every ball in $X$ can be covered by a fixed number of balls in $X$ with half the radius. In 1983, Assouad \cite{Ass} proved that for any $0< \epsilon<1$, doubling metric spaces admit bi--H\"older embeddings with H\"older exponent $\epsilon$, into an Euclidean space $\mathbb{R}^{N}$, for some $N$ depending on $\epsilon$. Assouad's pioneering work triggered a lot of research in the field of embeddings. Recent results by Naor \& Neiman \cite{NN}, based on work by Abraham, Bartal \& Neiman \cite{ABN}, showed that we can actually choose $N$ in Assouad's theorem to be independent of $\epsilon$. The same result was also achieved independently by David \& Snipes \cite{snipes}.

The doubling property remains invariant under bi--Lpschitz maps  and we also know that any subset of an Euclidean space is doubling (see Chapter 9 in the book of Robinson \cite{JCR}). Hence, a metric space needs to be doubling in order to admit a bi--Lipschitz embedding into an Euclidean space. However, the condition is not sufficient since there are examples of doubling spaces due to Lang \& Plaut \cite{LP}, Semmes \cite{SE} and Pansu \cite{PANSU} that cannot be embedded in a bi--Lipschitz way into any Hilbert space.

An equivalent definition of a doubling space is given which involves the Assouad dimension $\romd_{A}(X)$. The Assouad dimension is related to homogeneous sets, which are defined as follows.

\begin{definition}\label{HOM}
A subset $V$ of a metric space $(X,d)$ is said to be $(M,s)-homogeneous$ if for every $x \in V$ and $r > \rho > 0$
$$N_{V}(r,\rho) = N(V \cap B(x,r), \rho) \leq M\left(\frac{R}{r}\right)^{s},$$
where $N(V \cap B(x,r), \rho)$ denotes the minimum number of balls of radius $\rho$ required to cover $V \cap B(x,r)$. 

The Assouad dimension of $V \subset (X,d)$, $\romd_{A}(V)$ is defined as the infimum of all $s >0$ such that $V$ is $(M,s)$ homogeneous for some $M >0$.
\end{definition}

Another notion of dimension which will be used later is the box--counting dimension whose definition we now recall.

\begin{definition}\label{BC2}
Suppose that $(E,\|\cdot\|)$ is a normed space. Let $X$ a compact subset of $E$ and let $N(X, \epsilon)$ denote the minimum number of balls of radius $\epsilon$ with centres in $X$ required to cover $X$. The (upper) box-counting dimension of $X$ is 

\begin{equation}\label{BC3}
\romd_{B}(X) = \limsup_{\epsilon \rightarrow 0} \frac{\log N(X, \epsilon)}{-\log \epsilon}.
\end{equation}

\end{definition}

It follows from the definition that if $d > \romd_{B}(X)$, then there exists some positive constant $C=C_{d}$, such that 

\begin{equation}
N(X, \epsilon) \leq C \epsilon^{-d}.
\end{equation}

We are interested in the notion of a weaker class of embeddings which are called almost bi--Lipschitz. Given $\delta \geq 0$, we say that a map $L \colon (X,d_{1}) \to (Y,d_{2})$, between two metric spaces is $\delta$-almost bi--Lipschitz if for $x\neq y \in X$, it satisfies
\[
	\frac{1}{C_{L}} \frac{d_{1}(x,y)}{\slog(d_{1}(x,y))^{\delta}} \leq d_{2}(L(x),L(y)) \leq C_{L} d_{1}(x,y),
\]
where $\slog(x) = \log\left(x + \frac{1}{x}\right)$ is defined as the symmetric logarithm function, for $x >0$.

In 2010, Olson \& Robinson \cite{OR} introduced a weaker notion of an $(\aaa,\bb)$--almost homogeneous metric space, which remains invariant under almost bi--Lipschitz maps.

\begin{definition}\label{homogeneousab}   
A metric space $(X,d)$ is $(\alpha, \beta)$-almost $(M,s)$- homogeneous if for any $0< \rho<r$
\begin{equation}\label{almosthomo}
	N_{X}(r,\rho) \leq M \left(\frac{r}{\rho}\right)^{s} \slog(r)^{\aaa} \slog(\rho)^{\bb}.
\end{equation}
\end{definition}

The authors studied the case when $X$ is a subset of a Hilbert space such that the difference $X-X$ is almost homogeneous and proved the following result.

\begin{theorem}\label{ETORASS}
Suppose that $H$ is a Hilbert space and $X \subset H$ such that $X-X$ is $(\aaa, \bb)$-almost homogeneous, for some $\aaa, \bb \geq 0.$ Then, for every 
\[
	\delta > \frac{3 + \aaa + \bb}{2},
\]
there exists a $N = N_{\delta} \in \mathbb{N}$ and a dense set of linear maps $L \colon H \to \mathbb{R}^{N}$ that are injective and $\delta$-almost bi-Lipschitz on $X$. 
\end{theorem}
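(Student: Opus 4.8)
The upper estimate is automatic: any bounded linear $L\colon H\to\mathbb{R}^{N}$ obeys $\|Lx-Ly\|\le\|L\|\,\|x-y\|$, and the lower estimate forces $Lx\ne Ly$ whenever $x\ne y$, so injectivity will come for free. Hence the whole task reduces to producing, densely in the space of bounded linear maps $H\to\mathbb{R}^{N}$, an $L$ with $\|L(x-y)\|\ge C_{L}^{-1}\,\|x-y\|\,\slog(\|x-y\|)^{-\delta}$ for all $x\ne y\in X$. Following the prevalence framework of Robinson \cite{JCR} and Olson--Robinson \cite{OR}, I would fix a finite-dimensional subspace $V\subset H$ with orthogonal projection $P_{V}$, place a Gaussian probability measure on $\{A\colon V\to\mathbb{R}^{N}\ \text{linear}\}$, and study perturbations $L=L_{0}+A\circ P_{V}$ of an arbitrary bounded $L_{0}$; proving that almost every such $L$ is $\delta$-almost bi--Lipschitz on $X$ yields a prevalent, hence dense, family of good maps.

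The engine is a scale-by-scale argument resting on an elementary small-ball bound: for a \emph{fixed} unit vector $z$ with $\|P_{V}z\|\ge\theta$, the random vector $Lz\in\mathbb{R}^{N}$ is Gaussian with variance at least $\theta^{2}$ in each coordinate, so a $\chi^{2}_{N}$ tail gives $\mathbb{P}\bigl(\|Lz\|\le\gamma\bigr)\le(C_{N}\gamma/\theta)^{N}$. To upgrade this to all of $X-X$, I would decompose dyadically: for $k$ large set $D_{k}=\{\,w\in X-X:2^{-k-1}\le\|w\|<2^{-k}\,\}\subset(X-X)\cap B(0,2^{-k})$. By almost homogeneity \eqref{almosthomo}, $D_{k}$ is covered by
\[
	N_{k}\le M\,\eta_{k}^{-s}\,\slog(2^{-k})^{\aaa}\,\slog(\eta_{k}2^{-k})^{\bb}\le C\,|k|^{\,\delta s+\aaa+\bb}
\]
balls of radius $\eta_{k}2^{-k}$ once we take $\eta_{k}\approx\slog(2^{-k})^{-\delta}\approx|k|^{-\delta}$ (both $\slog$ factors being $\approx|k|$ for large $|k|$). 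Applying the small-ball bound with threshold $\gamma_{k}\approx|k|^{-\delta}$ to each of the $N_{k}$ rescaled unit centres at level $k$, taking a union bound, and using a crude a priori bound $\|L\|\le R$ (valid off an arbitrarily small-measure set) to absorb the net error $R\eta_{k}2^{-k}$ into $\gamma_{k}2^{-k}$, we find that off a set of measure $\le N_{k}(C_{N}\gamma_{k})^{N}\le C'|k|^{\,\delta s+\aaa+\bb-\delta N}$ every $w\in D_{k}$ satisfies $\|Lw\|\ge c\,\gamma_{k}\|w\|\ge c'\,\|w\|\,\slog(\|w\|)^{-\delta}$. Summing the exceptional measures over $k$ converges provided $\delta(N-s)>1+\aaa+\bb$; since the almost-homogeneity hypothesis lets us take $s$ as small as we please, this imposes only $\delta N>1+\aaa+\bb$, and a Borel--Cantelli argument then makes all scales simultaneously good with full probability, which is the desired lower estimate with $C_{L}=1/c'$.

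The main obstacle — and the reason the final condition is $\delta>(3+\aaa+\bb)/2$ rather than merely ``$N$ large'' — is the genuinely infinite-dimensional step. The small-ball bound only controls directions $z$ with $\|P_{V}z\|\ge\theta$, so one must guarantee that at every scale the relevant normalised differences are $\theta$-nearly captured by a single finite-dimensional $V$; estimating $\dim V$ through the covering numbers of an almost-homogeneous set re-introduces the exponents $\aaa,\bb$, effectively as an ``almost-thickness'' loss that has to be paid \emph{before} the almost-bi--Lipschitz refinement above. Balancing this loss against the summability requirement $\delta N>1+\aaa+\bb$ and the Ma\~{n}\'{e}-type transversality used for injectivity, while exploiting the quadratic ($\ell^{2}$) structure of the Hilbert norm — which is responsible for the factor $\tfrac12$ — links the admissible $N$ to $\delta$ and, after optimising over $N=N_{\delta}$, yields exactly $\delta>(3+\aaa+\bb)/2$. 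Injectivity of the resulting $L$ on $X$ is then immediate from its lower bound.
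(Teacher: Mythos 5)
You should first note that the paper does not prove Theorem \ref{ETORASS} at all: it is quoted from Olson \& Robinson \cite{OR}, and the closest argument actually carried out in the paper is the proof of Theorem \ref{ET254}, which uses the Hunt--Kaloshin probe space $\mathbb{E}_{\gamma}(\mathcal{V})$ built on a \emph{sequence} of scale-adapted subspaces $V_{n}$ (spanned by functionals aligned with the centres of a cover of $B_{2^{-n}}(0)\cap(X-X)$), together with the small-ball estimate of Lemma \ref{1.6}. Your scheme replaces this by a single finite-dimensional $V$ and a Gaussian measure on maps $A\circ P_{V}$, and this is where the genuine gap lies. Your small-ball bound $\mathbb{P}(\|Lz\|\le\gamma)\le(C_{N}\gamma/\theta)^{N}$ requires $\|P_{V}z\|\ge\theta\|z\|$ uniformly over all normalised differences at all dyadic scales, which fails in an infinite-dimensional Hilbert space: compactness of $X$ gives a finite-dimensional $V$ that approximates the points of $X$, but differences of small norm can be nearly orthogonal to any fixed $V$. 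You name this obstacle in your last paragraph, but you do not resolve it; you simply assert that an unspecified ``almost-thickness loss'', balanced against summability and a transversality argument, ``yields exactly $\delta>(3+\aaa+\bb)/2$''. That step \emph{is} the theorem. In the real argument the scale-$n$ directions are caught by $V_{n}$ with $\dim V_{n}=d_{n}\lesssim n^{\aaa+\bb}$, the probe components at scale $n$ are damped by $n^{-\gamma}$ (and normalised by $\sqrt{d_{n}}$ in the Hilbert setting), and the measure estimate carries the factor $(n^{\gamma}d_{n}\,\epsilon/|g(z)|)^{N}$; it is precisely this factor that produces a threshold of the form $\gamma+\tfrac{\aaa+\bb}{2}$, with the Hilbert-space normalisation responsible for the $3/2$. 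Your bookkeeping, which omits it, gives an exceptional measure $\lesssim |k|^{\delta s+\aaa+\bb-\delta N}$ whose summability can be arranged for \emph{every} $\delta>0$ by taking $N$ large --- so your computation cannot possibly single out $\delta>(3+\aaa+\bb)/2$, and the claimed optimisation is never performed.

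Two further points. First, the remark that ``the almost-homogeneity hypothesis lets us take $s$ as small as we please'' is wrong: $s$ is fixed by the hypothesis (taking $s$ arbitrarily small would mean Assouad dimension zero); the correct mechanism, as in the paper's choice of $N$ with $\bigl(\tfrac{\aaa+\bb}{2}+\gamma\bigr)N+1<\delta(N-s)$, is to let $N$ grow for fixed $s$, which is exactly why the final threshold is independent of $s$. Second, Borel--Cantelli only makes cofinitely many scales good for a.e.\ $L$; the finitely many remaining scales (pairs $x,y$ at moderate separation, depending on $L$) still need a positive lower bound before you may conclude injectivity or the global estimate \eqref{WEM}. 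The paper handles the analogous step in Theorem \ref{ET254} by intersecting with the prevalent set of bi--H\"older maps from Theorem \ref{ET154}; your sketch mentions ``Ma\~{n}\'{e}-type transversality'' only in passing and supplies no argument, so injectivity is not in fact ``immediate'' from what you have established.
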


\begin{remark}\label{ETORASSREMARK}
The condition on the set of differences on the result of Olson \& Robinson \cite{OR}, was used to control covers of balls around the origin. In particular, their result can be restated to hold for sets $X$ such that $X-X$ is almost homogeneous at zero.
\end{remark}

The set of embeddings in the above theorem is actually `prevalent', which is a notion of `almost every' in the context of infinite dimensional spaces. The authors used techniques introduced by Hunt \& Kaloshin \cite{HK} and also used by Margaris \& Robinson \cite{MargarisRob} to construct embeddings for sets with finite box--counting dimension. 

Sets of differences were again studied by Robinson \cite{Rob}, who proved that for any subset of a Banach space $X$ such that $X-X$ is homogeneous, $X$ admits $\delta$-almost bi--Lipschitz embeddings into Euclidean spaces, for all $\delta >1$. The exponent $\delta$ was shown to be sharp, in this case.

Unfortunately, the fact that $X$ is (almost) homogeneous does not necessarily imply that $X-X$ is also homogeneous (see examples of this kind of sets in chapter 9 in the book of Robinson \cite{JCR}).

We are interested in subsets of Banach spaces such that the set of differences is almost homogeneous at the origin, i.e. satisfy \eqref{almosthomo} only for balls around zero. The interesting fact about this condition is that it remains invariant under linear almost bi--Lipschitz maps, as we show in section \ref{ASSH}. In section \ref{ASSH}, we show that under this property we can construct a linear almost bi--Lipschitz embedding from a subset of a Banach space into a Hilbert space. Since the condition remains invariant, we then use Olson's \& Robinson's result (Theorem \ref{ETORASS}) to construct an embedding into an Euclidean space.

In section \ref{ASSEUC}, we extend the above result and we show that when $X-X$ is almost homogeneous at the origin, then there exists a prevalent set of linear almost bi--Lipschitz embeddings from $X$ into an Euclidean space. In particular, we show the following result.

\begin{theorem}\label{ET254}
Fix any $M \geq 1$, $s >0$ and $\aaa, \bb \geq 0$.  Suppose $X$ is a compact subset of a Banach space $\mathfrak{B}$ such that $X-X$ is $(\aaa, \bb)$-almost $(M,s)$-homogeneous at the origin. Then, given any $\delta >1+\frac{\aaa + \bb}{2}$, there exists a $N = N_{\delta} \in \mathbb{N}$ and a prevalent set of linear maps $L \colon \mathfrak{B} \to \mathbb{R}^{N}$ that are injective on $X$ and bi--Lipschitz with $\delta$--logarithmic corrections. In particular, they satisfy
\begin{equation}\label{WEM}
	\frac{1}{C_{L}} \frac{\|x-y\|}{\slog(\|x-y\|)^{\delta}} \leq |L(x) - L(y)| \leq C_{L} \|x-y\|,
\end{equation}
for some $C_{L} >0$ and for all $x,y \in X$.
\end{theorem}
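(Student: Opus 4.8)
The plan is to combine the two ingredients the introduction advertises: first reduce the Banach-space problem to a Hilbert-space problem by constructing a single linear almost bi--Lipschitz embedding $T\colon \mathfrak B \to H$ into a Hilbert space, and then apply Theorem~\ref{ETORASS} (in the form of Remark~\ref{ETORASSREMARK}, i.e.\ only needing homogeneity at the origin) to the image $T(X)$. For the first step I would argue that the hypothesis ``$X-X$ is $(\aaa,\bb)$-almost $(M,s)$-homogeneous at the origin'' gives, for each scale $2^{-k}$, a cover of $(X-X)\cap B(0,2^{-k})$ by at most $M 2^{s}\slog(2^{-k+1})^{\aaa}\slog(2^{-k})^{\bb}$ balls of radius $2^{-k-1}$; iterating down to scale $2^{-j}$ for $j\ge k$ produces a controlled $2^{-j}$-net of each annulus, and hence a countable family of bounded linear functionals $\phi_{k,i}\in\mathfrak B^{*}$ (norming the net points, via Hahn--Banach) whose weighted $\ell^{2}$-combination $T x=\bigl(c_{k,i}\phi_{k,i}(x)\bigr)_{k,i}$ separates differences at the right rate. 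Choosing the weights $c_{k,i}$ to decay like $2^{-k}$ times a summable factor in the number of net points at scale $k$ makes $T$ bounded into $\ell^{2}$, and the lower bound $\|Tx-Ty\|_{\ell^{2}}\gtrsim \|x-y\|/\slog(\|x-y\|)^{\cdot}$ comes from picking, for a given pair $x,y$ with $\|x-y\|\approx 2^{-k}$, the net point closest to $x-y$ at that scale and the functional attached to it. This is exactly the ``linear almost bi--Lipschitz embedding into a Hilbert space'' promised for section~\ref{ASSH}.

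Next I would check that the property ``$X-X$ almost homogeneous at the origin'' is preserved by $T$. Since $T$ is linear, $T(X)-T(X)=T(X-X)$, and a linear map that is almost bi--Lipschitz on $X-X$ distorts covers of small balls around $0$ only by factors that are polynomial in $\slog$; so $T(X)-T(X)$ is $(\aaa',\bb')$-almost homogeneous at the origin for exponents $\aaa',\bb'$ differing from $\aaa,\bb$ by a bounded amount coming from the logarithmic corrections of $T$. Here one must be a little careful to track how the $\slog^{\delta_{T}}$ correction of $T$ feeds into the homogeneity exponents; keeping $\delta_{T}$ as close to its infimal admissible value as we like, the new exponents can be taken arbitrarily close to $\aaa,\bb$. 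Feeding $T(X)\subset H$ into Theorem~\ref{ETORASS} then yields, for every $\delta'>\tfrac{3+\aaa'+\bb'}{2}$, a finite $N$ and a dense (indeed prevalent) set of linear $S\colon H\to\mathbb R^{N}$ that are injective and $\delta'$-almost bi--Lipschitz on $T(X)$.

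The composition $L=S\circ T\colon \mathfrak B\to\mathbb R^{N}$ is then linear, injective on $X$, and almost bi--Lipschitz, but a naive composition of the two logarithmic corrections would \emph{add} the exponents and give $\delta\approx \tfrac{3+\aaa+\bb}{2}+\delta_{T}$, which is worse than the claimed $1+\tfrac{\aaa+\bb}{2}$. The key point — and the step I expect to be the main obstacle — is to avoid paying twice for the logarithmic loss. I would instead build $L$ in one stroke, mirroring the Olson--Robinson/Hunt--Kaloshin argument directly in $\mathfrak B$: using the net functionals $\phi_{k,i}$ above, take a random finite-dimensional linear map $L\colon\mathfrak B\to\mathbb R^{N}$ drawn from a suitable probe measure (a Gaussian combination of finitely many $\phi_{k,i}$, or the image of a prevalence-defining measure under the pre-embedding $T$), and run the Borel--Cantelli estimate scale by scale. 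At scale $2^{-k}$ the bad set of maps that fail the lower bound on some net pair has probability bounded by (number of pairs at scale $k$) $\times$ (Gaussian small-ball probability in $\mathbb R^{N}$), and the number of pairs carries only the single homogeneity factor $\slog(2^{-k})^{\aaa+\bb}$; choosing the threshold in the lower bound to decay like $2^{-k}\slog(2^{-k})^{-\delta}$ with $\delta>1+\tfrac{\aaa+\bb}{2}$ makes these probabilities summable, so almost every (prevalent) $L$ works simultaneously at all scales. Passing from net pairs to arbitrary $x,y\in X$ (using that $X$ is compact, hence totally bounded, and the Lipschitz upper bound to absorb the approximation) finishes the proof. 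The exponent $1+\tfrac{\aaa+\bb}{2}$ is exactly the ``$\tfrac12$ for the metric part plus $\tfrac{\aaa}{2}+\tfrac{\bb}{2}$ for the two homogeneity corrections'' bookkeeping, with the spurious $+1$ of the two-step composition eliminated.
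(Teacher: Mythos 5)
Your final ``one-stroke'' construction is essentially the paper's route for the small-scale part: the paper also spans finite-dimensional subspaces $V_{n}\subset\mathfrak{B}^{*}$ by the Hahn--Banach functionals attached to a cover of the dyadic annuli of $X-X$ near $0$ (Lemma \ref{AssL}), builds the Hunt--Kaloshin probe space $\mathbb{E}_{\gamma}(\mathcal{V})$ with weights $n^{-\gamma}$, $\gamma>1$, and runs a Borel--Cantelli argument using the covering count $k_{n}\leq C n^{\delta s+\aaa+\bb}$ and the small-ball estimate of Lemma \ref{1.6}. (Two points of bookkeeping: the number of net points at scale $n$ carries the extra factor $n^{\delta s}$, not just $\slog(2^{-n})^{\aaa+\bb}$, because the cover is at radius $n^{-\delta}2^{-n}$; this is why $N$ must be chosen large depending on $s$ and $\delta$. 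Also the probe measure must involve functionals from \emph{every} scale, as in $\mathbb{E}_{\gamma}(\mathcal{V})$ --- a ``Gaussian combination of finitely many $\phi_{k,i}$'' cannot produce the lower bound at all small scales --- and the estimate has to be carried out for $f+L$ with $f$ an arbitrary fixed linear map, since that is what prevalence requires.) You were also right to discard the composition $S\circ T$ through a Hilbert space, which is exactly the exponent loss the paper avoids.

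The genuine gap is your claim that summability makes ``almost every $L$ work simultaneously at all scales.'' Borel--Cantelli only yields, for $\mu$-a.e.\ $L$, an $n_{L}$ such that the lower bound $|(f+L)z|\geq n^{-\delta}2^{-n}$ holds for all $n\geq n_{L}$; at the finitely many large scales $n<n_{L}$ (pairs with $\|x-y\|$ bounded below) nothing prevents $(f+L)(x)=(f+L)(y)$ with $x\neq y$, and neither compactness of $X$ nor the Lipschitz upper bound can repair this, because injectivity of a prevalent set of maps is itself a nontrivial statement. The paper closes this hole with a second prevalent set: it first shows $\romd_{B}(X-X)<\infty$ from almost homogeneity at the origin (Lemma \ref{DBAA}), then invokes Robinson's theorem (Theorem \ref{ET154}) to get a prevalent set $T_{2}$ of maps with H\"older continuous inverse on $X$, and finally intersects it with the Borel--Cantelli set $T_{1}$; for $L\in T_{1}\cap T_{2}$ the H\"older lower bound handles all pairs with $\|x-y\|\geq R2^{-n_{L}}$, since on those finitely many scales the logarithmic factor $\slog(\|x-y\|)^{\delta}$ is comparable to a constant depending only on $n_{L}$. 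Without this second ingredient (or some substitute argument producing prevalent injectivity with a quantitative modulus at large separations), your proof establishes \eqref{WEM} only for $\|x-y\|$ below an $L$-dependent threshold and does not give injectivity on $X$.
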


We note that when $X-X$ is homogeneous, we obtain embeddings for any $\delta >1$, exactly as in Robinson's result. 

\section{Embedding into a Hilbert space when $X-X$ is almost homogeneous at the origin}\label{ASSH}

\begin{definition}\label{HOM0}
Suppose $M,s > 0$ and let $(X, |\cdot|)$ be a normed space. Then, given any $\aaa, \bb \geq 0$, we say that $X-X$ is $(\aaa, \bb)$-almost $(M, s)$- homogeneous at the origin if given any $0< \rho < r,$ there exist $z_{i} \in X-X$ such that
\[
	B_{r}(0) \cap X-X \subseteq \bigcup_{i=1}^{N} B_{\rho} (z_{i})
\]
and 
\[N \leq \left(\frac{r}{\rho}\right)^{s} \slog (r)^{\aaa} \slog (\rho)^{\bb}.\]
\end{definition}

We give some useful properties of the symmetric logarithm, which we will be using frequently. For the proof, see the paper of Olson \& Robinson \cite{OR}.

\begin{proposition}\label{properties of slog}
Let $C >0$ and $\gamma \geq 0$. There exist positive constants $A_{C}, B_{C}, a_{\gamma}, b_{\gamma}, c$ such that
\begin{enumerate}
\item $ |\log x| \leq \slog(x) \leq \log 2 + |\log x|$.
\item $A_{C} \, \slog(x) \leq \slog(C x) \leq B_{C} \, \slog(C x)$.
\item $ a_{\gamma} \, \slog(x) \leq \slog( x \slog(x)^{\gamma}) \leq b_{\gamma} \, \slog(x).$ 
\item  $\mathrm{If} \, \, \, \, 2^{-(k+1)} \leq x \leq 2^{-k}, \qquad \mathrm{then} \qquad \slog(x) \geq c \, \slog(2^{-k}).$
\end{enumerate}
\end{proposition}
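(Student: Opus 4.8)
The plan is to reduce all four estimates to Property (1) together with two structural facts about $\slog$ that I would establish first. Since $x + 1/x$ is invariant under $x \mapsto 1/x$, the function satisfies the symmetry $\slog(x) = \slog(1/x)$, so in every argument I may assume $x \geq 1$. Moreover, the arithmetic--geometric mean inequality gives $x + 1/x \geq 2$, hence the \emph{uniform floor} $\slog(x) \geq \log 2 > 0$ for every $x > 0$. This floor is the engine behind the whole proposition: any bound of the shape $f(x) \leq \slog(x) + K$ with $K$ a constant upgrades automatically to $f(x) \leq (1 + K/\log 2)\,\slog(x)$, converting additive corrections into multiplicative ones.

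For Property (1) I would argue by cases. When $x \geq 1$ we have $x \leq x + 1/x \leq 2x$ (since $1/x \leq 1 \leq x$), and taking logarithms gives $\log x \leq \slog(x) \leq \log 2 + \log x = \log 2 + |\log x|$. The case $0 < x \leq 1$ then follows immediately from the symmetry $\slog(x) = \slog(1/x)$ together with $|\log x| = |\log(1/x)|$. For Property (2) the upper bound is direct: by (1), $\slog(Cx) \leq \log 2 + |\log C| + |\log x| \leq \slog(x) + (\log 2 + |\log C|)$, which the floor converts into $\slog(Cx) \leq B_C\,\slog(x)$. The lower bound I would obtain by a \emph{symmetric} application of the same estimate: writing $x = (1/C)(Cx)$ gives $\slog(x) \leq \slog(Cx) + (\log 2 + |\log C|)$, i.e. $\slog(Cx) \geq \slog(x) - (\log 2 + |\log C|)$; splitting into the regime where $\slog(x)$ is large (where the right side is at least $\tfrac12 \slog(x)$) and the complementary regime (where both sides are pinched between $\log 2$ and a fixed constant) yields $A_C\,\slog(x) \leq \slog(Cx)$.

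Property (3) is the main obstacle, because of the nested $\slog$ and because I must control $\log \slog(x)$. Applying (1) to $y = x\,\slog(x)^{\gamma}$ gives $|\log y| \leq |\log x| + \gamma\,|\log \slog(x)|$; here $\log 2 \leq \slog(x) \leq \log 2 + |\log x|$, so $\log \slog(x)$ is bounded above by $\log(\log 2 + |\log x|)$, which is $o(|\log x|)$ and in particular dominated by $\slog(x)$. Feeding this back through (1) and the floor produces the upper bound $\slog(y) \leq b_{\gamma}\,\slog(x)$. For the lower bound I would again split regimes: when $\slog(x)$ is large (equivalently $x$ near $0$ or $\infty$) the term $|\log x|$ dominates the slowly growing correction $\gamma \log \slog(x)$, so $|\log y| \geq \tfrac12 |\log x| \geq c'\,\slog(x)$ and $\slog(y) \geq |\log y|$ closes the estimate; on the remaining bounded range the floor $\slog(y) \geq \log 2$ against a bounded $\slog(x)$ gives a positive ratio by compactness.

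Finally, Property (4) is elementary: for $0 < x \leq 2^{-k} \leq 1$ we have $\slog(x) \geq |\log x| = -\log x \geq k\log 2$ by (1) and the monotonicity of $-\log$ on $(0,1]$, while $\slog(2^{-k}) \leq \log 2 + |\log 2^{-k}| = (k+1)\log 2$; hence $\slog(x) \geq \tfrac{k}{k+1}\,\slog(2^{-k}) \geq \tfrac12\,\slog(2^{-k})$, so $c = \tfrac12$ works (the lower constraint $2^{-(k+1)} \leq x$ is not even needed for this direction). Throughout, the recurring technical point is the interplay between the uniform floor $\slog(x) \geq \log 2$ and the sublinear growth of $\log$, which together let me absorb every additive or logarithmic discrepancy into a multiplicative constant.
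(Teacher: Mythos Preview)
The paper does not actually prove this proposition: immediately after the statement it says ``For the proof, see the paper of Olson \& Robinson \cite{OR}.'' So there is no in-paper argument to compare against; your proposal supplies what the paper outsources.

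Your argument is correct and self-contained. The key device you isolate --- the uniform floor $\slog(x) \geq \log 2$, used to upgrade additive errors to multiplicative constants --- is exactly what makes all four items work, and your regime-splitting for (2) and (3) is the standard way to handle the lower bounds. Two small remarks. First, the blanket reduction ``assume $x \geq 1$ by the symmetry $\slog(x)=\slog(1/x)$'' is fine for (1), (2), (4), but does \emph{not} directly apply to (3), since $x \mapsto x\,\slog(x)^{\gamma}$ is not equivariant under $x\mapsto 1/x$; however, your actual treatment of (3) never invokes that reduction and instead splits on the size of $\slog(x)$, which is symmetric, so the proof stands. Second, in (4) your ratio $k/(k+1)\geq 1/2$ needs $k\geq 1$; the case $k=0$ is covered separately by the floor $\slog(x)\geq \log 2 = \slog(1)$, so $c=1/2$ still works globally. (You also silently correct the evident typo in the statement of (2), where the right-hand side should read $B_C\,\slog(x)$ rather than $B_C\,\slog(Cx)$.)
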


It is trivial that when $X-X$ is $(\aaa, \bb)$-almost homogeneous, the above property is satisfied.
We now show that the condition on covers of balls around zero remains invariant for the sets of differences under linear almost bi--Lipschitz maps.

\begin{lemma}\label{Invariance}
Let $M \geq 1$, $s \geq 0$ and suppose $ \aaa, \bb \geq 0.$ Suppose $X$ is a compact subset of a Banach space $\mathfrak{B}$ and suppose that $X-X$ is $(\aaa, \bb))$-almost $(M, s)$- homogeneous at the origin. Let $\delta >1$ and let $\mathfrak{B'}$ be another Banach space. Suppose that $\Phi \colon \mathfrak{B} \to \mathfrak{B'}$ is a bounded linear map such that 
\begin{equation}\label{invariance for linear}
\frac{1}{C} \frac{\|x-y\|}{\slog(\|x-y\|)^{\delta}} \leq \|\Phi(x) - \Phi(y)\| \leq  C \|x-y\|,
\end{equation}
for some positive constant $C$ and for every $x,y \in X$.
Then, $\Phi(X)-\Phi(X) \subset \mathfrak{B'} $ is $(\aaa +\delta s, \bb)$-almost homogeneous at $0$.
\end{lemma}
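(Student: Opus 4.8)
The plan is to transfer a cover of a ball in $X-X$ centred at the origin through $\Phi$, paying a price in radii governed by the distortion estimate \eqref{invariance for linear}. Fix $0 < \rho < r$ and consider the ball $B_r(0) \cap (\Phi(X) - \Phi(X))$ in $\mathfrak{B}'$. Every point of this set is of the form $\Phi(x) - \Phi(y) = \Phi(x-y)$ with $x,y \in X$; by the lower bound in \eqref{invariance for linear}, $\|\Phi(x-y)\| \geq C^{-1}\|x-y\|/\slog(\|x-y\|)^\delta < r$ forces $\|x-y\|$ to be bounded by some $r' = r'(r)$. More precisely, since the map $t \mapsto t/\slog(t)^\delta$ is (eventually) increasing, the preimage in $X-X$ of $B_r(0)$ lies inside $B_{r'}(0) \cap (X-X)$ with $r' \approx C r\,\slog(r)^\delta$ up to the constants from Proposition \ref{properties of slog}(2)--(3). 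By hypothesis there exist $z_1,\dots,z_N \in X-X$ with $B_{r'}(0)\cap(X-X) \subseteq \bigcup_{i=1}^N B_{\rho'}(z_i)$ for a radius $\rho'$ to be chosen, and $N \le (r'/\rho')^s \slog(r')^\aaa \slog(\rho')^\bb$.

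Next I would push this cover forward. The upper (Lipschitz) bound in \eqref{invariance for linear} gives $\|\Phi(w) - \Phi(z_i)\| \le C\|w - z_i\| \le C\rho'$ whenever $w \in B_{\rho'}(z_i)\cap(X-X)$, so the images $\Phi(z_i)$ (which lie in $\Phi(X)-\Phi(X)$) furnish a cover of $B_r(0)\cap(\Phi(X)-\Phi(X))$ by balls of radius $C\rho'$. To land on balls of the prescribed radius $\rho$ I set $\rho' = \rho/C$, which by Proposition \ref{properties of slog}(2) changes $\slog(\rho')$ only by a multiplicative constant. It then remains to bound $N$ by $(r/\rho)^s \slog(r)^{\aaa+\delta s}\slog(\rho)^\bb$ up to a constant, and to absorb that constant: since $\slog \ge \log 2 > 0$ everywhere, a bounded multiplicative constant is absorbed by increasing $\aaa$ (or $\bb$) if one insists on the clean form, but in fact the definition at the origin already tolerates the $M$ and the constants, so this is bookkeeping via Proposition \ref{properties of slog}.

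The decisive computation is the radius ratio: with $r' \approx C r\,\slog(r)^\delta$ and $\rho' = \rho/C$ we get
\[
  \frac{r'}{\rho'} \approx C^2\,\frac{r}{\rho}\,\slog(r)^\delta,
  \qquad
  \slog(r') \le b_\delta\,\slog(r)
\]
by Proposition \ref{properties of slog}(3), so
\[
  N \le \left(\frac{r'}{\rho'}\right)^{s}\slog(r')^{\aaa}\slog(\rho')^{\bb}
    \lesssim \left(\frac{r}{\rho}\right)^{s}\slog(r)^{\delta s}\,\slog(r)^{\aaa}\,\slog(\rho)^{\bb}
    = \left(\frac{r}{\rho}\right)^{s}\slog(r)^{\aaa+\delta s}\,\slog(\rho)^{\bb},
\]
which is exactly the claimed $(\aaa+\delta s,\bb)$-almost homogeneity at the origin.

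The main obstacle I anticipate is making the step "$\|\Phi(x-y)\| < r \Rightarrow \|x-y\| < r'$" fully rigorous, because $t \mapsto t/\slog(t)^\delta$ is not monotone near $t=1$ and can be small both for small $t$ and (relative to $t$) never too large; one has to argue, using Proposition \ref{properties of slog}, that $\|\Phi(x-y)\| \ge c\|x-y\|/\slog(\|x-y\|)^\delta$ still forces an a priori bound $\|x-y\| \le r' = C' r\,\slog(r)^\delta$, perhaps splitting into the cases $\|x-y\| \le 1$ and $\|x-y\| > 1$, and using compactness of $X$ to bound $\mathrm{diam}(X-X)$ so that only finitely much of the range matters. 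Everything else is a matter of chasing the constants $A_C, B_C, a_\gamma, b_\gamma, c$ through the two inequalities.
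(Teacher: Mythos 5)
Your proposal is correct and follows essentially the same route as the paper's proof: cover the inflated ball $B_{C\,r\,\slog(r)^{\delta}}(0)\cap(X-X)$ by balls of radius $\rho/C$, push the centres forward through $\Phi$ using the Lipschitz upper bound, and estimate $N$ via Proposition \ref{properties of slog}, with the multiplicative constants absorbed exactly as you describe. The step you flag as the main obstacle is resolved in the paper just as you anticipate: one fixes $R$ with $X-X\subset B_{R/4}(0)$, splits on whether $\|x-y\|\leq r$, and in the remaining case uses that $\|x-y\|/R<1$ together with $\|x-y\|>r$ so that, up to the constants of Proposition \ref{properties of slog}, $\slog(\|x-y\|/R)$ is controlled by $\log(R/\|x-y\|)\leq\log(R/r)$ and hence by $\slog(r)$, which yields the a priori bound $\|x-y\|\leq C_{R}\,r\,\slog(r)^{\delta}$.
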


\begin{proof}
Let $0<\rho <r.$ We want to cover the ball centred at $0$ in $\Phi(X) - \Phi(X)$. Let $x, y \in X$ be such that $\|\Phi(x) - \Phi(y)\| \leq r.$

Suppose first that $\|x - y\| \leq r$.
Then, since $X-X$ is almost homogeneous at $0$, there exist $z_{i} \in X-X$ such that
\[
	B_{r}(0) \cap X-X \subset \cup_{i=1}^{N} B_{\rho/C}(z_{i}),
\]
and $N \leq \left(\frac{r}{\rho}\right)^{s} \slog(r)^{\aaa} \slog(\rho)^{\bb}.$
Let $j \leq N$, such that $\|x- y-z_{j}\| \leq \rho/C$. Then, since $\Phi$ satisfies \eqref{invariance for linear}, we have
\[
	\|\Phi(x - y - z_{j})\| \leq C \|x-y-z_{j}\| \leq \rho.
\]
In particular we have that 
$$B_{r}(0) \cap (\Phi(X) - \Phi(X)) \subset \cup_{i=1}^{N} B_{\rho}(\Phi(z_{i}))$$ and 
$$N \leq \left(\frac{r}{\rho}\right)^{s} \slog(r)^{\aaa} \slog(\rho)^{\bb}.$$

Suppose now that $\|x - y\| > r$.
Then, let $R > 4$ be such that $X-X \subset B_{R/4}(0).$ Assume without loss of generality that $r < R/4.$
Then, by Lemma \ref{properties of slog}, we have
\begin{align*}
	\frac{\|x-y\|}{R} & \leq C \frac{1}{R} \|\Phi(x) - \Phi(y)\| \, \slog\left(\frac{\|x-y\|}{R}\right)^{\delta} \leq \frac{C r}{R} \log\left(\frac{R}{\|x-y\|}\right)^{\delta}\\
	& \leq \frac{C r}{R} \log\left(\frac{R}{r}\right)^{\delta} \leq \frac{C r}{R} \slog\left(\frac{r}{R}\right)^{\delta} \leq C_{R} \, r \slog(r)^{\delta}.
\end{align*}
Thus,
\[
	\|x - y\| \leq C_{R} \, r \slog(r)^{\delta},
\]
We now again use the fact that $X-X$ is almost homogeneous at $0$ to deduce a cover of the ball $B_{r \, \slog(r)^{\delta}}(0)$ in $X-X$ by at most $N$ balls of radius $\rho/C$. We now estimate $N$ using again properties of the symmetric logarithm (Lemma \ref{properties of slog}).
\begin{align*}
N & \leq \left(\frac{r \, \slog(r)^{\delta}}{\rho}\right)^{s} \slog(r \slog(r)^{\delta})^{\aaa} \slog(\rho)^{\bb} \leq b_{\delta} \left(\frac{r}{\rho}\right)^{s} \slog(r)^{\delta s} \slog(r)^{\aaa} \slog(\rho)^{\bb}\\
& \leq \left(\frac{r}{\rho}\right)^{s} \slog(r)^{\aaa + \delta s} \slog(\rho)^{\bb}.
\end{align*}
Arguing as in the previous case, $\{\Phi(z_{i})\}_{i=1}^{N} \subset \Phi(X)-\Phi(X)$ and we deduce that
\[
	B_{r}(0) \cap \Phi(X) - \Phi(X) \subset \bigcup_{i=1}^{N} B_{\rho}(\Phi(z_{i})),
\]
and 
\[
	N \leq \left(\frac{r}{\rho}\right)^{s} \slog(r)^{\aaa + \delta s} \slog(\rho)^{\bb}.
\]
In particular, $\Phi(X) - \Phi(X)$ is $(\aaa + \delta s, \bb)$- almost homogeneous.
\end{proof}

We now want to show that when $X$ is a subset of a Banach space such that $X-X$ is almost homogeneous at the origin, then it admits a linear almost bi--Lischitz embedding into a Hilbert space. In particular, by Lemma \ref{Invariance} the set of differences of the image of $X$ into $H$ will also satisfy the almost homogeneous property at $0$.
Following techniques used by Margaris \& Robinson \cite{MargarisRob}, we use a Hahn-Banach argument to construct the embedding at a single scale, as in the following Lemma.

\begin{lemma}\label{AssL}
Let $M \geq 1$, $R\geq 1$ and $s >0$ and suppose that $X$ is a compact subset of a Banach space $\mathfrak{B}$ such that $X-X$ is $(\aaa, \bb)$-almost $(M,s)$-homogeneous at $0$, for some $\aaa, \bb \geq 0$. Then, there exists a collection $\left(\phi_{n}\right)_{n=1}^{\infty}$ of elements of $\mathcal{L}\left(\mathfrak{B};\mathbb{R}^{m_{n}}\right)$ that satisfy $\|\phi_{n}\| \leq C_{R} \, \sqrt{m_{n}} \leq C_{R} \, n^{\frac{\aaa + \bb}{2}}$ and for every
$$x,y \in X \, \, \mbox{with} \, \, 2^{-(n+1)} R \leq \|x-y\| \leq R \, 2^{-n},$$ we have that $$|\phi_{n}(x-y)| \geq \frac{1}{4} \|x-y\|.$$
\end{lemma}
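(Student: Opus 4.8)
The plan is to build each $\phi_n$ by a Hahn--Banach separation argument at the single scale $r = R\,2^{-n}$, then estimate the target dimension $m_n$ using the almost-homogeneity of $X-X$ at the origin. First I would fix $n$ and set $r = R\,2^{-n}$, $\rho = r/8$ (say), and apply Definition~\ref{HOM0} to the ball $B_r(0)\cap(X-X)$ to obtain points $z_1,\dots,z_{m_n}\in X-X$ with $B_r(0)\cap(X-X)\subseteq\bigcup_{i=1}^{m_n}B_\rho(z_i)$ and $m_n\le (r/\rho)^s\,\slog(r)^\aaa\,\slog(\rho)^\bb = 8^s\,\slog(R\,2^{-n})^\aaa\,\slog(2^{-n-3}R)^\bb$. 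By Proposition~\ref{properties of slog}(1)--(2), $\slog(R\,2^{-n})$ and $\slog(2^{-n-3}R)$ are both comparable to $\slog(2^{-n})\le \log2 + n\log2 = C n$, so $m_n \le C_R\, n^{\aaa+\bb}$, which yields $\sqrt{m_n}\le C_R\, n^{(\aaa+\bb)/2}$ as required.

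Next, for each $i$ with $\|z_i\|$ in the relevant range (i.e. those $z_i$ close to some genuine difference of length $\sim r$; discard the rest), I would use Hahn--Banach to pick a functional $f_i\in\mathfrak{B}^\ast$ with $\|f_i\|=1$ and $f_i(z_i) = \|z_i\|$. Then define $\phi_n = (f_1,\dots,f_{m_n})\colon\mathfrak{B}\to\RR^{m_n}$. The operator norm bound is immediate: $|\phi_n(v)|\le\sqrt{m_n}\,\max_i|f_i(v)| \le \sqrt{m_n}\,\|v\|$, giving $\|\phi_n\|\le\sqrt{m_n}\le C_R\, n^{(\aaa+\bb)/2}$ (absorbing the $8^{s/2}$ factor into $C_R$). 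For the lower bound, take $x,y\in X$ with $2^{-(n+1)}R\le\|x-y\|\le R\,2^{-n}=r$. Then $x-y\in B_r(0)\cap(X-X)$, so there is some $j$ with $\|x-y-z_j\|\le\rho = r/8 \le \tfrac14\|x-y\|$ (using $r\le 2\|x-y\|$). Consequently $|\phi_n(x-y)|\ge |f_j(x-y)| \ge f_j(z_j) - |f_j(x-y-z_j)| \ge \|z_j\| - \|x-y-z_j\| \ge (\|x-y\|-\rho) - \rho = \|x-y\| - 2\rho \ge \|x-y\| - \tfrac12\|x-y\| = \tfrac12\|x-y\| \ge \tfrac14\|x-y\|$.

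I would need to double-check the arithmetic on the choice of $\rho$: I want $2\rho \le \tfrac34\|x-y\|$ so the final constant comes out at least $\tfrac14$, and since $\rho = r/8$ with $r\le 2\|x-y\|$ we get $2\rho = r/4 \le \|x-y\|/2$, which is comfortably enough. The main (minor) obstacle is bookkeeping: ensuring the constant $C_R$ genuinely only depends on $R$ (and on $M$, $s$, $\aaa$, $\bb$, which are fixed) and does not secretly depend on $n$; this is handled cleanly by Proposition~\ref{properties of slog}, since the only $n$-dependence enters through $\slog(2^{-n})\le Cn$, and all the multiplicative constants $8^s$, $8^{s/2}$, and the comparability constants $A_C,B_C$ from part~(2) are $n$-independent. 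A second small point is that we should only build $f_i$ for those $z_i$ that actually arise as (approximations to) differences $x-y$ with $\|x-y\|\ge 2^{-(n+1)}R$; the covering guarantee still applies to every such $x-y$, so this restriction costs nothing. Everything else is a routine application of Hahn--Banach and the triangle inequality.
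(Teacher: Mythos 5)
Your proposal is correct and follows essentially the same route as the paper: cover $B_{R2^{-n}}(0)\cap(X-X)$ at scale comparable to $R\,2^{-(n+3)}$ using almost homogeneity at the origin, norm the centres with Hahn--Banach functionals, stack them into $\phi_n$, and conclude by the triangle inequality, with the $\slog$ properties giving $m_n\le C_R\,n^{\aaa+\bb}$. Your arithmetic with $\rho=r/8$ is in fact slightly cleaner than the paper's (which has a small internal inconsistency between cover radii $R\,2^{-(n+2)}$ and $R\,2^{-(n+3)}$), and your "discard the irrelevant centres" remark is harmless bookkeeping.
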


\begin{proof}
Suppose $Z = X-X$. Since $Z$ is $(\aaa, \bb))$-homogeneous we can cover 
$$Z\cap B_{ R 2^{-n}}(0) = \{z \in X-X : \|z\| \leq R \,2^{-n}\}$$ 
by no more than $$m_{n} \leq M \left(\frac{R \, 2^{-n}}{R \, 2^{-(n+3)}}\right)^{s} \slog(R \, 2^{-n})^{\aaa} \slog(R 2^{-(n+2)})^{\bb} \leq C n^{\aaa + \bb} \log 2 = C \, n^{\aaa + \bb} $$ balls of radius $R 2^{-(n+2)}$,
for some positive constant $C$ that depends only on $M, s, R$.
Let the centres of these balls be $z^{n}_{j}$, for $j \leq C \, n^{\aaa + \bb}$. 

Using the Hahn--Banach Theorem, we can find $f^{n}_{j} \in \mathfrak{B}^{*}$ such that $\|f^{n}_{j}\| = 1$ and $f^{n}_{j}(z^{n}_{j}) = \|z^{n}_{j}\|.$
Now, define $\phi_{n} \colon \mathfrak{B} \to \mathbb{R}^{m}$ by
$$\phi_{n}(x) = \left(f^{n}_{1}(x),...,f^{n}_{m_{n}}(x)\right).$$
It is clear that $\|\phi_{n}\| \leq \sqrt{m_{n}} \leq C \, n^{\aaa+\bb/2}$ and if $z = x-y \in X-X$ is such that $$2^{-(n+1)} R \leq \|z\| \leq R \,2^{-n},$$ then for some $z^{n}_{j}$ we have that $\|z-z^{n}_{j}\| \leq R 2^{-(n+2)}$ and so
\begin{align*}
|\phi_{n}| & \geq |f^{n}_{j}(z)| = |f^{n}_{j}(z-z^{n}_{j}+z^{n}_{j})|\\
& \geq |f^{n}_{j}(z^{n}_{j})| - |f^{n}_{j}(z-z^{n}_{j})| \geq \|z^{n}_{j}\| - \|z-z^{n}_{j}\|\\
& \geq \|z\| - 2\|z-z^{n}_{j}\| \geq R 2^{-(n+1)} - R 2^{-(n+2)} \geq \frac{1}{4}\|z\|,
\end{align*}
which concludes the proof of the embedding at a single scale.
\end{proof}

The above Lemma allows for the following embedding.

\begin{theorem}\label{Almost homogeneous H}
Suppose that $X$ is a compact subset of a Banach space $\mathfrak{B}$ such that $X-X$ is $(\aaa, \bb)$-almost homogeneous at the origin. Then, given any $\delta >\frac{1 + \aaa + \bb}{2}$, there exists a Hilbert space $H$ and a bounded linear map $\Phi \colon \mathfrak{B} \to H$, that satisfies
$$\frac{1}{C_{\Phi}} \frac{\|x-y\|}{\slog(\|x-y\|)^{\delta}} \leq \|\Phi(x) - \Phi(y)\| \leq \|x-y\|,$$
for some positive constant $C_{\Phi}$ and for every $x,y \in X$.
\end{theorem}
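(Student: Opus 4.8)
The plan is the standard one for this kind of statement: glue together the single--scale embeddings produced by Lemma \ref{AssL} into one linear map by taking a weighted $\ell^2$--sum, and then choose the weights so as to balance the upper bound against the lower bound. Since $X$ is compact, first fix $R = \max\{1, 2\,\mathrm{diam}(X)\}$, so that every pair $x \neq y$ in $X$ satisfies $R\,2^{-(n+1)} \leq \|x-y\| \leq R\,2^{-n}$ for exactly one integer $n \geq 1$. Apply Lemma \ref{AssL} with this $R$ to obtain $\phi_n \in \mathcal{L}(\mathfrak{B};\mathbb{R}^{m_n})$ with $\|\phi_n\| \leq C_R\, n^{(\aaa+\bb)/2}$ and $|\phi_n(x-y)| \geq \tfrac14\|x-y\|$ whenever $R\,2^{-(n+1)} \leq \|x-y\| \leq R\,2^{-n}$. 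Let $H = \bigl(\bigoplus_{n\geq 1}\mathbb{R}^{m_n}\bigr)_{\ell^2}$ and define
$$
	\Phi(x) = \bigl(a_n\,\phi_n(x)\bigr)_{n\geq 1}, \qquad a_n = c\, n^{-\delta},
$$
where $c>0$ is a small constant to be fixed below.

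Next I would verify well--definedness and the upper bound. For any $x,y \in X$ we have $\|\Phi(x)-\Phi(y)\|^2 = \sum_{n\geq 1} a_n^2\,|\phi_n(x-y)|^2 \leq \bigl(\sum_{n\geq 1} a_n^2\|\phi_n\|^2\bigr)\|x-y\|^2$, and $\sum_{n\geq 1} a_n^2\|\phi_n\|^2 \leq c^2 C_R^2 \sum_{n\geq 1} n^{\aaa+\bb-2\delta}$. This is exactly where the hypothesis enters: the series converges precisely because $\aaa+\bb-2\delta < -1$, i.e. $\delta > \tfrac{1+\aaa+\bb}{2}$. Choosing $c$ so that $c^2 C_R^2 \sum_{n\geq 1} n^{\aaa+\bb-2\delta} \leq 1$, the same computation shows $\Phi$ is well defined and bounded on all of $\mathfrak{B}$ with $\|\Phi\|\leq 1$, and in particular $\|\Phi(x)-\Phi(y)\| \leq \|x-y\|$ for all $x,y \in X$.

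For the lower bound, take $x \neq y$ in $X$ and let $n \geq 1$ be the unique index with $R\,2^{-(n+1)} \leq \|x-y\| \leq R\,2^{-n}$. Keeping only the $n$-th block gives
$$
	\|\Phi(x)-\Phi(y)\| \geq a_n\,|\phi_n(x-y)| \geq \frac{c}{4}\, n^{-\delta}\,\|x-y\|,
$$
so it only remains to compare $n^{-\delta}$ with $\slog(\|x-y\|)^{-\delta}$. Since $2^{-(n+1)} \leq \|x-y\|/R \leq 2^{-n}$, item (4) of Proposition \ref{properties of slog} gives $\slog(\|x-y\|/R) \geq c_0\,\slog(2^{-n}) \geq c_0(\log 2)\,n$, while item (2) gives $\slog(\|x-y\|/R) \leq B_{1/R}\,\slog(\|x-y\|)$; hence $n \leq C_1\,\slog(\|x-y\|)$ for a constant $C_1 = C_1(R)$. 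Since $\delta>0$, this yields $n^{-\delta} \geq C_1^{-\delta}\,\slog(\|x-y\|)^{-\delta}$, and combining with the previous display,
$$
	\|\Phi(x)-\Phi(y)\| \geq \frac{c}{4\,C_1^{\delta}}\,\frac{\|x-y\|}{\slog(\|x-y\|)^{\delta}} =: \frac{1}{C_\Phi}\,\frac{\|x-y\|}{\slog(\|x-y\|)^{\delta}},
$$
which is the asserted estimate; injectivity of $\Phi$ on $X$ follows from it. The one genuine point of the argument is the choice of weights $a_n$: they must decay slowly enough (essentially $a_n \gtrsim n^{-\delta}$, up to the comparison $\slog(\|x-y\|)\asymp n$) to survive in the lower bound at scale $n$, yet fast enough that $\sum a_n^2\|\phi_n\|^2 < \infty$ given the polynomial growth $\|\phi_n\| \lesssim n^{(\aaa+\bb)/2}$; both are possible simultaneously exactly when $\delta > \tfrac{1+\aaa+\bb}{2}$. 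Everything else is bookkeeping with Proposition \ref{properties of slog}.
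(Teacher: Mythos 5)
Your proposal is correct and follows essentially the same route as the paper: apply Lemma \ref{AssL} at each dyadic scale, assemble the $\phi_n$ into a weighted $\ell^2$--sum with weights $n^{-\delta}$ (convergent precisely because $\alpha+\beta-2\delta<-1$), and obtain the lower bound from a single block together with the $\slog$ comparisons of Proposition \ref{properties of slog}. The only differences are cosmetic --- your explicit $\ell^2$ direct sum in place of the paper's cyclic-basis construction, and the normalising constant $c$, which in fact yields the stated upper bound $\|\Phi(x)-\Phi(y)\|\leq\|x-y\|$ more literally than the paper's own computation.
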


\begin{proof}
Take $R >6$ such that 
\[
	X-X \subset B_{R/2}(0) \subset B_{R}(0).
\]
Take $\delta$ such that
$$\delta > \frac{1 + \aaa + \bb}{2}.$$
Let $m_{n}, \phi_{n}$ be from Lemma \eqref{AssL}. Suppose $\{{e_{k}}\}_{k=1}^{m}$ is a basis for $\mathbb{R}^{m}$, which we cyclically extend to all $k \in \mathbb{N}$, as in the previous chapter.
Then, we define $\Phi \colon \mathfrak{B} \to H$ by 
$$\Phi(x) = \sum_{k=1}^{\infty} k^{-\delta} \phi_{k}(x) \otimes \hat{e_{k}}.$$
Then, $\Phi$ is obviously linear and for every $x \in \mathfrak{B}$, we have
$$\|\Phi(x)\|^{2} \leq \sum_{n=1}^{\infty} |n^{- \delta} \phi_{n}(x)|^{2} \leq \|x\|^{2} \sum_{n=1}^{\infty} n^{- 2 \delta} n^{\aaa + \bb }  = \|x\|^{2} \sum_{n=1}^{\infty} n^{- 2\delta + \aaa + \bb} < \infty.$$
Hence 
\[
	\|\Phi\| \leq \sum_{n=1}^{\infty} n^{- 2\delta + \aaa + \bb} < \infty,
\]
since $\aaa + \bb - 2\delta < -1.$
Then, for any $x , y \in X$, let $k \geq 1$ be such that
\[
	2^{-(k+1)} R \leq \|x- y\| \leq R 2^{-k}.
\] 
By definition of $\Phi$, we have
\begin{align*}
\|\Phi(x) - \Phi(y)\| = \|\Phi(x-y)\| \geq k^{-\delta} |\phi_{k}(x-y)| \geq k^{-\delta} \frac{1}{4} \|x-y\|.
\end{align*}
Using properties of the symmetric logarithm (Lemma \ref{properties of slog}), we obtain
\begin{align*}
\slog(\|x-y\|)^{\delta} & \geq A_{R} \, \slog\left(\frac{\|x-y\|}{R}\right)^{\delta} \geq A_{R} \, b \, \slog(2^{-k})^{\delta}\\
& \geq A_{R} \, b \, k^{\delta},
\end{align*}
for constants $A_{R}, b$ independent of $x,y$. Thus, 
\[
	\|\Phi(x) - \Phi(y)\| \geq \frac{1}{C_{\Phi}} \frac{\|x-y\|}{\slog(\|x-y\|)^{\delta}}.\qedhere
\]
\end{proof}

Hence, by Theorem \ref{ETORASS} and Remark \ref{ETORASSREMARK}, we immediately obtain an almost bi--Lipschitz embedding into an Euclidean space. However, in the section we establish the existence of a prevalent set of almost bi--Lipschitz maps into Euclidean spaces directly.

We also note that the above theorem can be used to provide embeddings of compact metric spaces, using the isometric embedding $\Phi^{*} \colon (X,d) \to L^{\infty}(X)$, given by $x \mapsto d(x, \cdot),$ due to Kuratowski, which was also mentioned in the previous chapter (see Lemma \ref{ET100}). In particular, we can define $`X-X'$ in this context to mean
$$X-X \equiv \Phi^{*}(X)-\Phi^{*}(X) =  \{f \in L^{\infty}(X) : f = d(x,\cdot) - d(y,\cdot), \, \, \text{for} \, \, x, y \, \, \text{in} \, \, X\}.$$

\section{Embeddings into an Euclidean space}\label{embedding almost homogeneous $0$}\label{ASSEUC}

\subsection{A measure based on sequences of linear subspaces}\label{M}

Before we prove our main embedding result, we will recall, following Robinson \cite{JCR}, the construction of a compactly supported probability measure that is based on the ideas in Hunt and Kaloshin \cite{HK} and will play a key role in our proof.

Suppose that $\mathfrak{B}$ is a Banach space and $\mathcal{V} = \{V_{n}\}_{n=1}^{\infty}$ a sequence of finite--dimensional subspaces of $\mathfrak{B}^{*}$, the dual of $\mathfrak{B}$.
Let us denote by $d_{n}$ the dimension of $V_{n}$ and by $B_{n}$ the unit ball in $V_{n}$. 

Now, we fix a real number $\alpha >1$ and define the space $\mathbb{E}_{\alpha}(\mathcal{V})$ as the collection of linear maps $L \colon \mathfrak{B} \to \mathbb{R}^{k}$ given by 
\[\mathbb{E} = \mathbb{E}_{\alpha}(\mathcal{V}) = \left\lbrace L = (L_{1}, L_{2},..., L_{k}) : L_{i} = \sum_{n=1}^{\infty} n^{-\alpha} \phi_{i,n} , \, \, \phi_{i,n} \in B_{n}\right\rbrace.\]
Let us also define
\[\mathbb{E}_{0} = \left\lbrace \sum_{n=1}^{\infty} n^{-\alpha} \phi_{i,n} , \, \, \phi_{i,n} \in B_{n}\right\rbrace.\]
Clearly $\mathbb{E} = \left(\mathbb{E}_{0}\right)^{k}$.

To define a measure on $\mathbb{E}$, we first take a basis for $V_{n}$ so that we can identify $B_{n}$ with a symmetric convex set $U_{n} \subset \mathbb{R}^{d_{n}}$. Then, we construct each $L_{i}$ randomly by choosing each $\phi_{i,n}$ with respect to the normalised $d_{n}$--dimensional Lebesgue measure $\lambda_{n}$ on $U_{n}$. Finally, by taking $k$ copies of this measure we obtain a measure on $\mathbb{E}.$
In particular we first consider $\mathbb{E}_{0}$ as a product space
\[\mathbb{E}_{0} = \prod_{n=1}^{\infty} B_{n},\] and define a measure $\mu_{0}$ on $\mathbb{E}_{0}$ as
\[\mu_{0} = \otimes_{n=1}^{\infty} \lambda_{n}.\]
Secondly, we consider  $\mathbb{E} = \mathbb{E}_{0}^{k}$ and define $\mu$ on $\mathbb{E}$ as
\[\mu = \prod_{i=1}^{k} \mu_{0}.\]

For any map $f \in \mathcal{L}(\mathfrak{B};\mathbb{R}^{k})$, Hunt and Kaloshin \cite{HK} proved the following upper bound on

\[\mu\{ L \in \mathbb{E} : |(f+L)x| \leq \epsilon\},\]
for $x \in \mathcal{B}$ and any $\epsilon > 0.$ For a more detailed proof, see Robinson \cite{JCR}.

\begin{lemma}\label{1.6}
Suppose that $x \in \mathcal{B}$, $\epsilon > 0$, $f \in \mathcal{L}(\mathfrak{B};\mathbb{R}^{k})$ and $\mathcal{V}=\{V_{n}\}$ as above. Then
\[\mu\{L \in \mathbb{E} : |(f + L)(x) | < \epsilon \} \leq  \left(n^{\alpha}d_{n} \frac{\epsilon}{|g(x)|}\right)^{k},\]
for any $g \in B_{n}$.
\end{lemma}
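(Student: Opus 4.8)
The plan is to reduce the statement to a one-dimensional computation by integrating out, for each coordinate $L_i$, only the single coefficient $\phi_{i,n}$ attached to the fixed index $n$. Fix the $g \in B_n$ from the statement. First I would observe that the event $\{|(f+L)(x)| < \epsilon\}$ is contained in the product over $i = 1,\dots,k$ of the events $E_i = \{ |(f_i + L_i)(x)| < \epsilon \}$, where $f_i$ and $L_i = \sum_{m} m^{-\alpha}\phi_{i,m}$ are the $i$-th components; since $\mu = \prod_{i=1}^k \mu_0$ is a product measure and the $E_i$ depend on disjoint blocks of coordinates, it suffices to bound $\mu_0(E_i)$ by $n^{\alpha} d_n \epsilon / |g(x)|$ and then take the $k$-th power. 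So the whole lemma comes down to the single-coordinate estimate, and I will drop the index $i$ from now on.

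For the single-coordinate bound I would write $L = n^{-\alpha}\phi_n + L'$, where $L' = \sum_{m \neq n} m^{-\alpha}\phi_m$ collects all the other terms, and condition on $L'$ (i.e. use Fubini to integrate first in the variable $\phi_n \in B_n$, with all other $\phi_m$ held fixed). Then $(f+L)(x) = n^{-\alpha}\phi_n(x) + c$, where $c = c(L', f, x)$ is a constant not depending on $\phi_n$. Hence the conditional event is $\{ \phi_n : |n^{-\alpha}\phi_n(x) + c| < \epsilon \} = \{\phi_n : \phi_n(x) \in (c', c'+2n^{\alpha}\epsilon) \}$ for a suitable $c'$, i.e. a slab in the variable $\phi_n \in V_n$ of thickness $2n^\alpha\epsilon$ measured against the linear functional $\phi_n \mapsto \phi_n(x)$. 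The key geometric point is then: the normalised Lebesgue measure $\lambda_n$ of a slab $\{\phi_n \in B_n : a < \phi_n(x) < a + t\}$ is at most $d_n t / (2|g(x)|)$ — using that $g \in B_n$ so the range of $\phi_n(x)$ over $B_n$ contains the interval $(-|g(x)|, |g(x)|)$ of length $2|g(x)|$, and that for a convex symmetric body in $\mathbb R^{d_n}$ the relative measure of a slab of thickness $t$ in a direction along which the body has width $W$ is at most $d_n t / W$ (the standard Brunn–Minkowski-type bound on cross-sections of convex bodies). This yields $\lambda_n(\text{slab}) \leq d_n (2n^\alpha\epsilon)/(2|g(x)|) = n^\alpha d_n \epsilon / |g(x)|$, uniformly in $L'$; integrating over $L'$ leaves the bound unchanged, giving $\mu_0(E) \leq n^\alpha d_n \epsilon / |g(x)|$, and taking the $k$-th power finishes it.

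The main obstacle — really the only non-routine ingredient — is the convex-geometry slab estimate: that for a symmetric convex body $K \subset \mathbb R^{d}$ and a linear functional $\ell$ with $\sup_K |\ell| = W/2$, one has $\lambda(K \cap \{a < \ell < a+t\})/\lambda(K) \leq d t / W$. I would either cite this (it follows from the fact that $s \mapsto \lambda_{d-1}(K \cap \{\ell = s\})^{1/(d-1)}$ is concave on its support by Brunn's theorem, so the slab of thickness $t$ sitting anywhere has measure at most that of the central slab, whose measure is at most $(t/W)\cdot d \cdot \lambda(K)$ after an elementary one-variable estimate), or simply invoke it as the corresponding lemma from Robinson \cite{JCR} or Hunt–Kaloshin \cite{HK}, since the excerpt already attributes the result to them. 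Everything else — the product structure, Fubini in the $\phi_n$ variable, and the reduction of the event to a slab — is bookkeeping.
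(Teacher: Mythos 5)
Your argument is correct and is essentially the standard proof of this estimate: the paper itself states Lemma \ref{1.6} without proof, deferring to Hunt \& Kaloshin \cite{HK} and Robinson \cite{JCR}, and the argument given there is exactly your reduction --- factor the event over the $k$ independent coordinates, freeze all coefficients except $\phi_{i,n}$, and bound the resulting slab $\{\phi_n \in B_n : \phi_n(x) \in (c', c'+2n^{\alpha}\epsilon)\}$ by the Brunn-type estimate $\lambda_n(\mathrm{slab}) \leq d_n t / W$ with $W \geq 2|g(x)|$. The only caveat is that the convex-geometry slab bound, which you correctly flag as the one non-routine ingredient, should be proved or cited explicitly (your Brunn's-theorem sketch, using symmetry of $U_n$ so that the central section is maximal and $\mathrm{vol}(U_n) \geq A_0 W/d_n$ by concavity of the section function to the power $1/(d_n-1)$, does close it).
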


\subsection{Prevalent set of embeddings when $X-X$ is almost homogeneous at the origin}

We now extend the result of the previous section and prove the existence of a prevalent set of almost bi--Lipschitz embeddings into Euclidean spaces for a compact subset of a Banach space such that $X-X$ is almost homogeneous at the origin. 

We first show that if $X-X$ is almost homogeneous at zero then the box--counting dimension of $X-X$ is finite.

\begin{lemma}\label{DBAA}
Take any $M \geq 1$ and $s \geq 0$. Suppose that $X$ is a compact subset of a Banach space $\mathfrak{B}$ such that $X-X$ is $(\aaa,\bb)$-almost homogeneous at the origin. Then,
$\romd_{B}(X-X) < \infty.$
\end{lemma}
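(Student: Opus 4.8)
The plan is to bound $N(X-X,\epsilon)$ for all small $\epsilon$ by applying the almost-homogeneity-at-the-origin hypothesis with a single large radius $r = R$, where $R$ is chosen so that $X-X \subset B_{R}(0)$ (possible since $X$ is compact, hence bounded). Indeed, since $(X-X) \cap B_{R}(0) = X-X$, the definition of $(\aaa,\bb)$-almost $(M,s)$-homogeneity at the origin gives, for every $0 < \epsilon < R$, a cover of $X-X$ by at most
\[
	N \le M\left(\frac{R}{\epsilon}\right)^{s} \slog(R)^{\aaa}\, \slog(\epsilon)^{\bb}
\]
balls of radius $\epsilon$ centred at points $z_{i} \in X-X$. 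These centres lie in $X-X$, so this is a legitimate cover for the purposes of Definition \ref{BC2}, and hence $N(X-X,\epsilon) \le N$ with $N$ as above.

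It then remains to feed this estimate into the definition of the upper box-counting dimension. Using Proposition \ref{properties of slog}(1), we have $\slog(\epsilon) \le \log 2 + |\log\epsilon|$, so for $\epsilon$ small, $\slog(\epsilon)^{\bb}$ grows only polylogarithmically in $1/\epsilon$, i.e. $\slog(\epsilon)^{\bb} = o(\epsilon^{-\eta})$ for any $\eta > 0$. Likewise $\slog(R)^{\aaa}$ and $M (R)^{s}$ are constants independent of $\epsilon$. Therefore
\[
	\limsup_{\epsilon \to 0} \frac{\log N(X-X,\epsilon)}{-\log\epsilon}
	\le \limsup_{\epsilon \to 0} \frac{\log\!\big(C\, \epsilon^{-s}\, \slog(\epsilon)^{\bb}\big)}{-\log\epsilon}
	= s + \limsup_{\epsilon \to 0} \frac{\bb \log \slog(\epsilon)}{-\log\epsilon} = s,
\]
since $\log\slog(\epsilon)/(-\log\epsilon) \to 0$ as $\epsilon \to 0$. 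Hence $\romd_{B}(X-X) \le s < \infty$.

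There is no real obstacle here: the proof is essentially a one-line application of the hypothesis with the radius taken to be the diameter bound, followed by the routine observation that logarithmic factors do not contribute to the box-counting dimension. The only small point to be careful about is that the covering balls in Definition \ref{BC2} are required to have centres in the set being covered, which is exactly why Definition \ref{HOM0} is stated with centres $z_{i} \in X-X$; I would remark on this explicitly. One could also note, for completeness, that the bound obtained is $\romd_{B}(X-X) \le s$, which is the expected sharp value.
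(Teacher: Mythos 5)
Your proof is correct and follows essentially the same route as the paper: apply the almost-homogeneity-at-the-origin hypothesis at the single scale $r=R$ with $X-X\subset B_{R}(0)$, and feed the resulting bound on $N(X-X,\epsilon)$ into the definition of the upper box-counting dimension. The only difference is in handling the logarithmic factor: the paper bounds $\slog(\epsilon)^{\bb}$ crudely by $\epsilon^{-\bb}$ and concludes $\romd_{B}(X-X)\leq s+\bb$, whereas your observation that $\log\slog(\epsilon)/(-\log\epsilon)\to 0$ gives the sharper bound $\romd_{B}(X-X)\leq s$; either suffices for the finiteness claimed.
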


\begin{proof}
Take any $\epsilon >0.$ Let $R > 1$ be such that
$$X-X \subset B_{R}(0).$$

Suppose that $1 < \epsilon < R$.
Since $X-X$ is almost homogeneous at $0$, there exist $\{z_{i}\}_{i=1}^{N} \subset X-X$ such that
\[
	X-X \subset B_{R}(0) \cap X-X \subseteq \cup_{i=1}^{N} B_{\epsilon}(z_{i}).
\]
Using properties of the $\slog$ function, we have 
\begin{align*}
N \leq \left(\frac{R}{{\epsilon}}\right)^{s} \slog (R)^{\aaa} \slog (\epsilon)^{\bb} \leq R^{s} (\slog R)^{\aaa} (\log 2 + \log \epsilon)^{\bb} \epsilon^{-s} \leq C_{R} \,  \epsilon^{-s}.
\end{align*}
Suppose now that $\epsilon < 1 < R$.
Since $X-X$ is almost homogeneous at $0$, there exist $\{z_{i}\}_{i=1}^{N} \subset X-X$ such that
\[
	X-X \subset B_{R}(0) \cap X-X \subseteq \cup_{i=1}^{N} B_{\epsilon}(z_{i}),
\]
and 
\begin{align*}
N & \leq \left(\frac{R}{{\epsilon}}\right)^{s} \slog (R)^{\aaa} \slog (\epsilon)^{\bb} \leq R^{s} (\slog R)^{\aaa} (\log \epsilon + \log \frac{1}{\epsilon})^{\bb} \epsilon^{-s} \\
& \leq R^{s} (\slog R)^{\aaa} \left(\log \frac{1}{\epsilon}\right)^{\beta} \epsilon^{-s} \leq R^{s} (\slog R)^{\aaa} \epsilon^{-\beta-s},
\end{align*}
which immediately implies that
\[
\romd_{B}(X-X) \leq \bb + s < \infty.\qedhere
\]
\end{proof} 

We now recall the result due to Robinson \cite{Rob} which provides a prevalent set of injective and bi--H\"older embeddings from $X$ into an Euclidean space when $\romd_{B}(X-X) < \infty$.

\begin{theorem}\label{ET154}
Suppose $\mathfrak{B}$ is a Banach space and let $X \subset \mathfrak{B}$ compact such that $\romd_{B}(X-X) < \infty$. Then for any integer $k > \romd_{B}(X-X)$ and any given $\theta$ with
\[0 < \theta < \frac{k -\romd_{B}(X-X)}{k\left(1 + \romd_{B}(X-X)\right)},\]
there exist a prevalent set of linear maps $L \colon \mathfrak{B} \to \mathbb{R}^{k}$ such that

$$
\|x-y\| \leq C_{L}|Lx-Ly|^{\theta}, \, \, \, \forall \, \, \, x,y \in X, \,\, \mbox{for some}\,\, C_{L}>0.
$$
In particular, every such $L$ is bijective from $X$ onto $L(X)$ with a H\"older continuous inverse.
\end{theorem}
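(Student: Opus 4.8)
The plan is to run a Hunt--Kaloshin prevalence argument with the probe measure $\mu$ on $\mathbb{E}_{\alpha}(\mathcal{V})$ constructed in Section \ref{M}, choosing the subspaces $\mathcal{V} = \{V_{n}\}$ to be adapted to the dyadic scales of $Z := X-X$. Fix $d$ with $\romd_{B}(Z) < d$, so that $N(Z,\epsilon) \leq C\epsilon^{-d}$, and recall that $Z$ is compact and symmetric with $0 \in Z$. The first step is to build $\mathcal{V}$: at each scale $2^{-n}$ cover $Z \cap B_{2^{-n}}(0)$ by $m_{n} \leq N(Z, 2^{-(n+2)}) \leq C\,2^{nd}$ balls of radius $2^{-(n+2)}$ with centres $z^{n}_{j}$, use Hahn--Banach exactly as in the proof of Lemma \ref{AssL} to select $f^{n}_{j} \in \mathfrak{B}^{*}$ with $\|f^{n}_{j}\| = 1$ and $f^{n}_{j}(z^{n}_{j}) = \|z^{n}_{j}\|$, and set $V_{n} = \lspan\{f^{n}_{j}\}$, so that $d_{n} = \dim V_{n} \leq m_{n} \leq C\,2^{nd}$. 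This yields the single--scale separation property: for every $w$ with $2^{-(n+1)} \leq \|w\| \leq 2^{-n}$ there is $g \in B_{n}$ with $|g(w)| \geq \tfrac{1}{4}\|w\|$.

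With $\mu$ in hand I would fix an arbitrary $f \in \mathcal{L}(\mathfrak{B};\mathbb{R}^{k})$ and estimate, for each $n$, the measure of the bad event $\mathcal{N}_{n} = \{L \in \mathbb{E} : \exists\, z \in Z,\ 2^{-(n+1)} \leq \|z\| \leq 2^{-n},\ |(f+L)(z)| < \epsilon_{n}\}$, for a threshold $\epsilon_{n}$ to be chosen. Since $\|f+L\| \leq \|f\| + \sup_{\supp\mu}\|L\| =: M_{0} < \infty$ uniformly (the supremum being finite because $\alpha>1$ forces $\sum_{n} n^{-\alpha} < \infty$), I cover the annulus by at most $C r_{n}^{-d}$ balls of radius $r_{n}$ and push a bad $z$ to the nearest centre $w_{i}$, which replaces the condition by $|(f+L)(w_{i})| < \epsilon_{n} + M_{0}r_{n}$; choosing $r_{n} \asymp \epsilon_{n}$ keeps the right--hand side comparable to $\epsilon_{n}$. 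Applying Lemma \ref{1.6} with the matching $g \in B_{n}$ satisfying $|g(w_{i})| \geq c\,2^{-n}$, and summing over the $C\epsilon_{n}^{-d}$ centres, gives
\[
\mu(\mathcal{N}_{n}) \leq C\,\epsilon_{n}^{-d}\,\bigl(n^{\alpha} d_{n}\, \epsilon_{n}\, 2^{n}\bigr)^{k} \leq C'\, n^{\alpha k}\, 2^{nk(d+1)}\, \epsilon_{n}^{\,k-d},
\]
using $d_{n} \leq C\,2^{nd}$ and $k > d$.

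The decisive step is the choice of thresholds, and I expect the matching of the arithmetic to the stated range of $\theta$ to need the most care. Taking $\epsilon_{n} = 2^{-n\gamma}$, the two constraints are: summability $\sum_{n}\mu(\mathcal{N}_{n}) < \infty$, which requires $\gamma > \tfrac{k(d+1)}{k-d}$, and the target exponent, for which the good--scale bound $|(f+L)(z)| \geq 2^{-n\gamma}$ together with $\|z\| \leq 2^{-n}$ forces $\gamma \leq 1/\theta$. A compatible $\gamma$ exists precisely when $\theta < \tfrac{k-d}{k(d+1)}$; since this ratio tends to $\tfrac{k-\romd_{B}(Z)}{k(1+\romd_{B}(Z))}$ as $d \downarrow \romd_{B}(Z)$, the hypothesis on $\theta$ lets me fix such a $d$ from the outset.

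Finally, Borel--Cantelli gives, for $\mu$-a.e.\ $L$, a random $N_{0}(L)$ beyond which no $\mathcal{N}_{n}$ occurs, so $\|z\| \leq C|(f+L)(z)|^{\theta}$ holds for all $z \in Z$ with $\|z\| \leq 2^{-N_{0}(L)}$. To cover the remaining coarse scales I would separately show that $\mu$-a.e.\ $L$ makes $f+L$ injective on $X$: for each fixed $\delta>0$ the same covering--plus--Lemma \ref{1.6} estimate bounds $\mu\{L : \exists\, z,\ \|z\|\geq\delta,\ |(f+L)(z)| < \eta\}$ by a quantity tending to $0$ as $\eta \to 0$, and intersecting over $\delta = 1/m$ shows $(f+L)(z) \neq 0$ for all $z \neq 0$; compactness of $Z \cap \{\|z\|\geq 2^{-N_{0}(L)}\}$ then bounds $|(f+L)(z)|$ below there, absorbing the finitely many coarse scales into a larger constant $C_{L}$. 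Intersecting the two full--measure events yields $\|x-y\| \leq C_{L}|(f+L)(x) - (f+L)(y)|^{\theta}$ for all $x,y \in X$; as this holds for every fixed $f$, the Borel set of such maps is prevalent with probe $\mu$, and the H\"older inverse bound makes each such map a bijection of $X$ onto its image with H\"older--continuous inverse.
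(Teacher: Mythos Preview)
The paper does not prove Theorem~\ref{ET154}: it is explicitly recalled as a result ``due to Robinson \cite{Rob}'' and is used as a black box in the proof of Theorem~\ref{ET254}. So there is no proof in the paper to compare your proposal against.

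That said, your outline is essentially the standard Hunt--Kaloshin argument and is correct in its main lines; indeed it parallels closely the paper's own proof of Theorem~\ref{ET254}, which is an adaptation of Robinson's argument. The choice of probe subspaces via Hahn--Banach at dyadic scales, the application of Lemma~\ref{1.6} after covering the annulus $Z_{n}$ at scale $r_{n}\asymp\epsilon_{n}$, and the Borel--Cantelli conclusion are exactly the ingredients used. Your arithmetic matching $\epsilon_{n}=2^{-n\gamma}$ with $\gamma>\tfrac{k(d+1)}{k-d}$ and $\theta<1/\gamma$ correctly recovers the stated range for $\theta$. One small point to tighten: the centres $w_{i}$ of your fine cover need not lie exactly in the annulus $[2^{-(n+1)},2^{-n}]$, only within $\epsilon_{n}$ of it, so strictly speaking the separation functional may come from $V_{n-1}$ or $V_{n+1}$ rather than $V_{n}$; this is harmless (it only changes constants) but should be acknowledged. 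Your treatment of the coarse scales via a separate injectivity-plus-compactness argument is also the standard device, and is exactly what the paper does in the last paragraph of its proof of Theorem~\ref{ET254} (there invoking Theorem~\ref{ET154} itself for that purpose).
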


We are now in position to state and prove the main result of this paper. For the proof, we use techniques introduced by Olson \& Robinson \cite{OR} and also used by Robinson \cite{Rob}, tailored to the weaker condition we now have at the origin.

\begin{theorem}\label{ET254}
Fix any $M \geq 1$, $s >0$ and $\aaa, \bb \geq 0$.  Suppose $X$ is a compact subset of a Banach space $\mathfrak{B}$ such that $X-X$ is $(\aaa, \bb)$-almost $(M,s)$-homogeneous at the origin. Then, given any $\delta >1+\frac{\aaa + \bb}{2}$, there exists a $N = N_{\delta} \in \mathbb{N}$ and a prevalent of linear maps $L \colon \mathfrak{B} \to \mathbb{R}^{N}$ that are injective on $X$ and bi--Lipschitz with $\delta$--logarithmic corrections. In particular, they satisfy
\begin{equation}\label{WEM}
	\frac{1}{C_{L}} \frac{\|x-y\|}{\slog(\|x-y\|)^{\delta}} \leq |L(x) - L(y)| \leq C_{L} \|x-y\|,
\end{equation}
for some $C_{L} >0$ and for all $x,y \in X$.
\end{theorem}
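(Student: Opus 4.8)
The plan is to follow the Olson--Robinson/Robinson probabilistic scheme, using the measure $\mu$ on $\mathbb{E} = \mathbb{E}_{\alpha}(\mathcal{V})^{N}$ constructed in Section~\ref{M}, but with a careful choice of the sequence $\mathcal{V} = \{V_n\}$ of finite-dimensional subspaces dictated by Lemma~\ref{AssL}. First I would fix $R > 6$ with $X - X \subset B_{R/2}(0)$, apply Lemma~\ref{AssL} to obtain the linear maps $\phi_n \in \mathcal{L}(\mathfrak{B}; \mathbb{R}^{m_n})$ with $m_n \leq C n^{\aaa+\bb}$ and $\|\phi_n\| \leq C_R n^{(\aaa+\bb)/2}$ that separate pairs at scale $2^{-n}R$, and then let $V_n \subset \mathfrak{B}^*$ be the span of the $m_n$ coordinate functionals of $\phi_n$, so that $d_n = \dim V_n \leq m_n$ and $V_n$ contains a functional $g$ with $|g(x-y)| \geq \frac14 \|x-y\|$ whenever $2^{-(n+1)}R \leq \|x-y\| \leq 2^{-n}R$. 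I would also need $k = N$ large; since $\romd_B(X-X) \leq \bb + s$ by Lemma~\ref{DBAA}, choosing $N > \bb + s$ (suitably) guarantees that Theorem~\ref{ET154} applies and gives, for a prevalent set of $L$, the lower H\"older bound $\|x-y\| \leq C_L |Lx - Ly|^{\theta}$.

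The core of the argument is the standard two-part estimate. The \emph{upper} Lipschitz bound $|Lx - Ly| \leq C_L \|x-y\|$ holds for every $L \in \mathbb{E}_{\alpha}(\mathcal{V})$ with a uniform constant, using $\|L_i\| \leq \sum_n n^{-\alpha}\|\phi_{i,n}\|$ together with $\|\phi_{i,n}\| \leq \|\phi_n\|/\|\phi_n\|$-type normalisation; one must check $\alpha$ is large enough that $\sum_n n^{-\alpha} d_n^{1/2} < \infty$, i.e. $\alpha > 1 + (\aaa+\bb)/2$, which is exactly where the threshold $\delta > 1 + (\aaa+\bb)/2$ enters (take $\alpha = \delta$). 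For the \emph{lower} bound with logarithmic correction, I would run the Borel--Cantelli / dyadic-annulus argument: for each $n$, cover the annulus $\{x,y \in X : 2^{-(n+1)}R \leq \|x-y\| \leq 2^{-n}R\}$ of the difference set, using almost-homogeneity at the origin, by at most $N_n \leq C (2^{-n}R/\rho_n)^s \slog(2^{-n})^{\aaa}\slog(\rho_n)^{\bb}$ balls of a well-chosen radius $\rho_n \sim 2^{-n} n^{-q}$; apply Lemma~\ref{1.6} with the separating functional $g \in V_n$ (so $|g(z)| \gtrsim 2^{-n}$ on the annulus) to bound $\mu\{L : |Lz| < \epsilon_n\}$ by $(n^{\alpha} d_n \epsilon_n / (c 2^{-n}))^{N}$; then sum over the $N_n$ cover elements and over $n$, choosing $\epsilon_n = 2^{-n} n^{-\delta - \text{const}}$ so that the resulting series $\sum_n N_n (n^{\alpha} d_n \epsilon_n 2^{n})^{N}$ converges once $N$ is taken large enough relative to $s, \aaa, \bb, \alpha$. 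Borel--Cantelli then gives, for $\mu$-a.e. $L$, a bound $|Lz| \geq c_L 2^{-n} n^{-\delta'}$ on the $n$-th annulus for all large $n$, and translating $n \asymp \log(1/\|z\|) \asymp \slog(\|z\|)$ via Proposition~\ref{properties of slog} yields $|Lx - Ly| \geq C_L^{-1}\|x-y\|/\slog(\|x-y\|)^{\delta}$. Intersecting this $\mu$-full set with the prevalent set from Theorem~\ref{ET154} (which in fact is only needed to handle the finitely many ``large-scale'' pairs or to bootstrap injectivity) and invoking that a countable intersection of prevalent sets is prevalent finishes the proof; injectivity on $X$ is immediate from the lower bound.

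The main obstacle I anticipate is bookkeeping the exponents so that the Borel--Cantelli series converges while the logarithmic correction exponent stays at $\delta$: one has to juggle the polynomial-in-$n$ factors coming from $d_n \leq C n^{\aaa+\bb}$, from $n^{\alpha}$ in Lemma~\ref{1.6}, from the cover count $N_n$ (which carries $\slog$-powers $n^{\aaa+\bb}$ and a factor $n^{qs}$ from $\rho_n$), and from the choice $\epsilon_n = 2^{-n} n^{-\theta_n}$, all raised to the power $N$ in the measure estimate versus to the first power in $N_n$. The delicate point is that $N$ can be chosen large (this costs nothing since $N$ is allowed to depend on $\delta$), which makes the $N$-th powers small and lets the single first-power factor $N_n$ be absorbed; but the exponent on $\slog(\|x-y\|)$ in the final bound is governed by the ratio of the $n$-power in $\epsilon_n^{-1}$ to $1$, and keeping that ratio down to any $\delta > 1 + (\aaa+\bb)/2$ requires the summability of $\sum_n n^{-\alpha} d_n^{1/2}$ from the upper bound to be the binding constraint, not the Borel--Cantelli step. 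I would therefore set $\alpha = \delta$ at the outset, verify the upper bound forces $\delta > 1 + (\aaa+\bb)/2$, and then show $N$ can be enlarged to make everything else converge; checking this compatibility carefully is the crux.
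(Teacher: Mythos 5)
Your outline follows the same route as the paper: Lemma \ref{AssL} supplies the subspaces $V_n$, the probe space and measure of Section \ref{M} together with Lemma \ref{1.6} and Borel--Cantelli over dyadic annuli (covered via almost homogeneity at the origin) give the small-scale lower bound for a prevalent set, and intersecting with the prevalent set of bi-H\"older maps from Theorem \ref{ET154} (available because $\romd_{B}(X-X)<\infty$ by Lemma \ref{DBAA}) handles pairs with $\|x-y\|$ bounded below; injectivity then follows. However, the parameter bookkeeping at the step you yourself identify as the crux is wrong, and as stated it would break the argument. The maps in $\mathbb{E}_{\alpha}(\mathcal{V})$ are built from $\phi_{i,n}$ lying in the \emph{unit ball} $B_n$ of $V_n$, so $\|L_i\|\le\sum_n n^{-\alpha}$ and the upper Lipschitz bound needs only $\alpha>1$; there is no $d_n^{1/2}$ factor there. (You are conflating this with the Hilbert-space map of Theorem \ref{Almost homogeneous H}, where the components $\phi_n$ have norm $\sim n^{(\aaa+\bb)/2}$ and the threshold is $\delta>(1+\aaa+\bb)/2$.) The threshold $\delta>1+\frac{\aaa+\bb}{2}$ does not come from the upper bound; it comes from the Borel--Cantelli step.

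Concretely: Lemma \ref{1.6} gives a per-ball probability of order $\bigl(n^{\alpha}\, d_n\, \epsilon_n\, 2^{n}\bigr)^{N}$ on the $n$-th annulus, and if the final correction is to be $\slog(\|x-y\|)^{\delta}$ you are forced to take $\epsilon_n\sim n^{-\delta}2^{-n}$, so this factor is $\sim n^{(\alpha-\delta)N}\, d_n^{\,N}$. With your choice $\alpha=\delta$ this is at least of order $d_n^{\,N}\ge 1$, which cannot decay in $n$; multiplied by the cover count $k_n\sim n^{\delta s+\aaa+\bb}$ the series diverges for \emph{every} $N$, and the only repair, shrinking $\epsilon_n$ by extra powers of $n$, pushes the final correction exponent strictly above $\delta$. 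The paper instead decouples the two exponents: it fixes $\gamma$ with $1<\gamma<\delta-\frac{\aaa+\bb}{2}$ (possible precisely because $\delta>1+\frac{\aaa+\bb}{2}$), uses the probe space $\mathbb{E}_{\gamma}(\mathcal{V})$, keeps $\epsilon_n=n^{-\delta}2^{-n}$, and then the per-ball factor decays like $n^{(\gamma+\frac{\aaa+\bb}{2}-\delta)N}$, so enlarging $N$ absorbs the first-power factor $k_n$; the binding constraint is Borel--Cantelli, not the upper bound. A minor further point: the large-scale pairs are not finitely many, but on the set where $\|x-y\|\ge R\,2^{-n_L}$ the H\"older inverse from Theorem \ref{ET154} gives the required uniform positive lower bound, exactly as in the paper's third step. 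With the parameter coupling corrected ($\alpha$ just above $1$, independent of $\delta$), your argument coincides with the paper's proof.
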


\begin{proof}
The proof consists of three parts. We first establish the existence of a prevalent set $T_{1}$ of linear maps $L$ that satisfy \eqref{WEM}, for all $x,y \in X$ such that $\|x - y\| \leq r_{L},$ for some $r_{L} > 0$. We then use Theorem \ref{ET154} to construct a prevalent set $T_{2}$ of linear maps that are injective on $X$ and have a H\"older continuous inverse. Finally, we show that all linear maps in $T_{1} \cap T_{2}$, which in particular is a prevalent set, satisfy \eqref{WEM}, for all $x,y \in X$.

Let $Z = X-X.$ Let $R > 6$ be such that
\[
	Z \subset B_{R/2}(0) \subset B_{R}(0).
\]
Let $\gamma >1$ be such that
$$\delta > \frac{\aaa + \bb}{2} + \gamma > \frac{\aaa + \bb}{2} +1.$$
By Lemma \ref{AssL}, for any given $n \in \mathbb{N}$, there exist a collection of functionals $\{f^{n}_{i}\}_{i=1}^{m_{n}} \subset \mathfrak{B}^{*}$ with $m_{n} \leq C n^{\aaa + \bb/2}$, $\|f^{n}_{i}\| = 1$ and such that for any $z \in Z$ that satisfies $ R\,  2^{-n+1} \leq \|z\| \leq R \, 2^{-n}$, there exists $f^{n}_{j}$ such that 
$$|f^{n}_{j}(z)| \geq 2^{(-n+3)}.$$
Let 
$$V_{n} = \lspan\{f^{n}_{1},...,f^{n}_{m_{n}}\}.$$
Let also $N \in \mathbb{N}$. Based on the sequence $\mathcal{V} = \{V_{n}\}_{n=1}^{\infty}$ and on $\gamma >1$, we follow the construction in the previous chapter and we define a probe space $\mathbb{E_{\gamma}} = \mathbb{E_{\gamma}}(\mathcal{V}) \subset \mathcal{L}\left(\mathcal{B}, \mathbb{R}^{N}\right)$ with a measure $\mu$ compactly supported on $\mathbb{E_{\gamma}}(\mathcal{V})$.

Following the argument of the previous chapter, we fix a map $f \in \mathcal{L}(\mathfrak{B} ; \mathbb{R}^{N})$ and suppose that $K'$ is a Lipschitz constant that holds for all $L \in \mathbb{E}.$ Then, we define
\[
	Z_{n} = \{z \in Z: 2^{(-n-1)}\, R \leq \|z\| \leq 2^{-n}\, R\}
\] and
\[
	Q_{n} = \{L \in \mathbb{E}: |(f+L)(z)| \leq n^{-\delta} 2^{-n}, \qquad \text{for some} \qquad z \in Z_{n}\}.
\] 

Since $X$ is $(\aaa, \bb)$- almost homogeneous at $0$, given any $n \in \mathbb{N}$, there exist $\{z^{n}_{i}\}_{i=1}^{k_{n}} \subset Z$ such that
\[
	Z_{n} \subset B_{R\, 2^{-n}}(0) \cap Z \subset \bigcup_{i=1}^{k_{n}} B_{n^{-\delta} 2^{-n}}(z_{i}),
\]
and
\[
 k_{n} \leq M \left(\frac{R \,2^{-n}}{n^{-\delta} 2^{-n}}\right)^{s} \slog(R\, 2^{-n})^{\aaa} \slog(n^{-\delta}2^{-n})^{\bb}\leq C\left(n^{\delta}\right)^{s} n^{\aaa + \bb},
\]
for some positive constant $C$ depending on $\aaa, \bb, M$.
Now let $L \in Q_{n}$. Then there exists $z \in Z_{n}$ such that $|(f+L)(z)| \leq n^{- \delta}2^{-n}.$
Since $z \in Z_{n}$, there exists $z^{n}_{i}$ such that
\[
	\|z - z^{n}_{i}\| \leq n^{-\delta} 2^{-n},
\]
which implies that
\begin{align*}
	|(f+L)(z^{n}_{i})| & \leq |(f+L)(z^{n}_{i}) + (f+L)(z) - (f+L)(z)|\\
	& \leq n^{-\delta} 2^{-n} + (\|f\|+\|L\|)n^{-\delta}2^{-n}\\& \leq (1 + \|f\| + K') n^{-\delta}2^{-n} = K n^{-\delta}2^{-n},
\end{align*}
where $K$ depends on $f$.

We now compute the measure of $Q_{n}$, based on Lemma \ref{1.6}. In particular, we have
\begin{align*}
\mu(Q_{n}) & \leq \sum_{i=1}^{k_{n}} \mu \{L \in \mathbb{E}: |(f+L)(z^{n}_{i})| \leq (1+K) n^{-\delta} 2^{-n}\}\\
& \leq k_{n} \left( \dim (V_{n}) \, n^{\gamma} K n^{-\delta} 2^{-n} |\phi(z^{n}_{i}|^{-1}\right)^{N},
\end{align*}
for any $\phi \in B_{n}$, the unit ball in $V_{n}$
Since $z^{n}_{i} \in Z_{n}$, there exists $f^{n}_{i} \in V_{n}$ such that $\|f^{n}_{i}\| = 1$ and $|f^{n}_{i}(z^{n}_{i})| \geq 2^{-(n+3)}.$
Therefore,
\[
	\mu(Q_{n}) \leq C n^{\delta s + \aaa + \bb  + \frac{\aaa + \bb}{2} N + \gamma N - \delta N}.
\]

Since $\delta > \frac{\aaa + \bb}{2} + \gamma$, we can choose $N$ big enough such that 
$$\frac{\left(\frac{\aaa + \bb}{2}+\gamma \right) N +1}{N - s} < \delta,$$
which implies that
$$\delta s + \frac{\aaa + \bb}{2}N + \gamma N - \delta N< -1.$$

Therefore, $\sum_{n=1}^{\infty}Q_{n} < \infty$ and by the Borel-Cantelli Lemma, for $\mu$-almost every $L \in \mathbb{E}$, there exists an $n_{L} \geq 1$ such that for all $n \geq n_{L}$
\[
	2^{-(n+1)} \, R \leq |z| \leq 2^{-n}\, R \qquad \Rightarrow \qquad |(f+L)z| \geq n^{-\delta}2^{-n}.
\]
Let $z \in Z$. If $\|z\| \leq R 2^{-n_{L}}$, then there exists $n \geq n_{L} \geq 1$ such that
\[
	2^{-(n+1)} R \leq \|z\| \leq 2^{-n} \, R.
\]
Therefore, arguing as in the end of Theorem \ref{Almost homogeneous H}, we obtain
\begin{align*}
|(f+L)(z)| \geq n^{-\delta} 2^{-n} \geq n^{-\delta} 2^{-n} \geq \frac{1}{A_{R}} \frac{\|z\|}{\slog(\|x-y\|)^{\delta}}.
\end{align*}
Thus, we proved that there exists a prevalent set of bounded linear maps $L \colon \mathbb{B} \to \mathbb{R}^{N}$, denoted by $T_{1}$ such that all $L \in T_{1}$ 
\[
 \frac{1}{C'} \frac{\|x-y\|}{\slog(\|x-y\|)^{\delta}} \leq |L(x) - L(y)| \leq C' \|x-y\|,
 \]
for all $x,y \in X$ such that $\|x - y\| \leq R 2^{-n_{L}}$, for some $n_{L} \geq 1.$ By Lemma \ref{DBAA}, we know that $\romd_{B}(X-X)<\infty$. Hence, by Theorem \ref{ET154}, for a fixed $\theta <1$,  we establish the existence of a $N_{1} \in \mathbb{N}$ and another prevalent set of linear maps $L \colon \mathbb{B} \to \mathbb{R}^{N_{1}}$, denoted by $T_{2}$ such that any $L \in T_{2}$ is $\theta$-bi-H\"older on $X$. Assume without loss of generality that $N_{1}\leq N$ and let $T = T_{1} \cap T_{2}$, which is still prevalent. Now suppose that $L \in T$ and $f \in \mathcal{L}(\mathfrak{B}, \mathbb{R}^{N})$.

Let $z \in Z$. Let $m \geq 1$ such that
\[
	2^{-(m+1)} R \leq \|z\| \leq 2^{-m} R.
\]
If $m \leq n_{L}$, we use that $f+L \in T \subset T_{1}$ and we fall in the previous case that we just proved.
Suppose now that $m > n_{L} \geq 1$. Then, $f+L \in T \subset T_{2}$. In particular,
\[
	|(f+L)(z)| \geq \|z\|^{1/\theta} \geq R^{1/\theta} 2^{-n_{L} /\theta}. 
\]
Hence,
\begin{align*}
\slog(\|x-y\|)^{\delta} & \geq C_{R} \slog\left(\frac{\|x-y\|}{R}\right)^{\delta} \\
& \geq C_{R}\, |\log 2^{-m}|^{\delta} =  C_{R}\,  m^{\delta} \log 2 >  A_{R}\,  n_{L}^{\delta} \, \log 2.
\end{align*} 
Thus,
\begin{align*}
|(f+L)(z)| \geq R^{1/\theta} 2^{-n_{L} /\theta}   A_{R} \,  n_{L}^{\delta} \, \log 2 \frac{\|x-y\|}{\slog(\|x-y\|)^{\delta}},
\end{align*}
and the proof is now complete.
\end{proof}

Arguing as in the previous case, we can extend the above theorem for any compact metric space, using the Kuratowski embedding. In particular, the following theorem holds.

\begin{theorem}
Suppose that $(X,d)$ is a metric space and let $\Phi \colon X \to L^{\infty}(X)$ be the Kuratowski embedding. Suppose that $\Phi(X)-\Phi(X)$ is $(\aaa, \bb)$-almost homogeneous at $0$ Then, for any given $\delta > (\aaa + \bb)/2 + 1$, there exists a $N = N_{\delta} \in \mathbb{N}$ and a map $L \colon (X,d) \to \mathbb{R}^{N}$, which is bi--Lipschitz with $\delta$-logarithmic corrections and injective.
\end{theorem}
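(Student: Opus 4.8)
The plan is to deduce the statement directly from Theorem~\ref{ET254} by transporting $X$ isometrically into the Banach space $\mathfrak{B} = L^{\infty}(X)$. First I would recall that the Kuratowski map $\Phi(x) = d(x,\cdot)$ is an isometric embedding: by the triangle inequality, $\|\Phi(x) - \Phi(y)\|_{L^{\infty}(X)} = \sup_{z \in X}|d(x,z) - d(y,z)| = d(x,y)$, the supremum being attained at $z = x$. Hence $(\Phi(X), \|\cdot\|_{L^{\infty}(X)})$ is isometric to $(X,d)$, and the hypothesis that $\Phi(X) - \Phi(X)$ is $(\aaa,\bb)$-almost $(M,s)$-homogeneous at the origin is precisely the hypothesis of Theorem~\ref{ET254} for the set $\Phi(X) \subset \mathfrak{B}$. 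As in the rest of the paper I would work with $X$ compact (the almost-homogeneity hypothesis forces $\Phi(X) - \Phi(X)$, hence $X$, to be totally bounded, so one may pass to the completion), which in particular makes $X$ bounded and thereby guarantees that $\Phi$ genuinely takes values in $L^{\infty}(X)$.

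Next I would apply Theorem~\ref{ET254} to the compact set $\Phi(X) \subset L^{\infty}(X)$: for any $\delta > 1 + \tfrac{\aaa + \bb}{2}$ it yields an $N = N_{\delta} \in \N$ and a nonempty (indeed prevalent) family of linear maps $\widetilde{L} \colon L^{\infty}(X) \to \mathbb{R}^{N}$ that are injective on $\Phi(X)$ and satisfy \eqref{WEM} with $X$ replaced by $\Phi(X)$. Fixing one such $\widetilde{L}$ and setting $L = \widetilde{L} \circ \Phi \colon X \to \mathbb{R}^{N}$, the estimate \eqref{WEM} for $\widetilde{L}$ together with $\|\Phi(x) - \Phi(y)\|_{L^{\infty}(X)} = d(x,y)$ gives at once
\[
	\frac{1}{C_{L}} \frac{d(x,y)}{\slog(d(x,y))^{\delta}} \leq |L(x) - L(y)| \leq C_{L}\, d(x,y), \qquad x,y \in X,
\]
so $L$ is bi--Lipschitz with $\delta$--logarithmic corrections; injectivity follows from the lower bound (or from injectivity of $\widetilde{L}$ on $\Phi(X)$ and of $\Phi$).

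I do not expect a genuine obstacle here, since all the analytic content is already contained in Theorem~\ref{ET254}. The only point that needs to be made carefully is that the almost-homogeneity hypothesis has been phrased directly in terms of $\Phi(X) - \Phi(X)$, so the isometric identification $X \cong \Phi(X)$ transfers it verbatim, with no change in the parameters $\aaa,\bb,M,s$; the remaining technicalities — that $\Phi$ lands in $L^{\infty}(X)$ and the reduction to the compact case — are routine and require no computation.
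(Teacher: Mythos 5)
Your proposal is correct and is essentially the paper's own argument: the paper proves this result in one line by applying Theorem~\ref{ET254} to $\Phi(X)\subset L^{\infty}(X)$ via the isometric Kuratowski embedding, exactly as you do with $L=\widetilde{L}\circ\Phi$. Your additional remarks on compactness and on $\Phi$ landing in $L^{\infty}(X)$ only make explicit points the paper leaves implicit (its statement tacitly assumes $X$ compact, as in the surrounding discussion).
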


\section{Conclusion and Future Work}

We proved an embedding theorem for subsets of Banach spaces such that the set of differences is almost homogeneous at $0$. The theorem is an extension of the respective result for subsets of Hilbert spaces, such that $X-X$ is almost homogeneous, that was proved by Olson \& Robinson \cite{OR}. In particular, we prove the theorem under a weaker condition which only deals with balls around $0$ rather than any point in $X-X$. There are a number of open questions that arise naturally from the results we presented and we would like to list some of them here.

\begin{Q}\label{question dsexte}
In Section \ref{embedding almost homogeneous $0$}, we construct a prevalent set of linear almost bi--Lipschitz embeddings from a subset of Banach space $X$ into some Euclidean space, when $X-X$ is almost homogeneous at $0$. Is it possible to extend the theorem for doubling subsets of Banach spaces? 
\end{Q}

\begin{Q}
Suppose $X$ is a doubling subset of a Banach space. Can we find an almost bi--Lipschitz embedding $\psi$ from $X$ into another Banach space $Y$ such that the set of differences $\psi(X) -  \psi(X)$ into $Y$ is almost homogeneous at $0$? This would yield a positive answer to the question above.
\end{Q}

\begin{Q}
In Section \ref{ASSH}, we show that if $X-X$ is almost homogeneous and can be embedded into a Hilbert space, using a linear almost bi--Lipschitz map $\Phi$, then $\Phi(X) - \Phi(X)$ is almost homogeneous at $0$. This leads to the following question. Is the condition that  $X-X$ is almost homogeneous at $0$ necessary in order to have linear bi--Lipschitz embeddings into some Euclidean space? (we already know by Theorem \ref{ET254} that it is sufficient).
\end{Q}

\bibliographystyle{plain}
\bibliography{ReferencesLipschitz}
\end{document}